\theoremstyle{definition}
\newtheorem{theorem}{Theorem}[section]
\newtheorem{lemma}[theorem]{Lemma}
\newtheorem{proposition}[theorem]{Proposition}
\newtheorem{observation}[theorem]{Observation}
\newtheorem{fact}[theorem]{Fact}
\newtheorem{corollary}[theorem]{Corollary}
\theoremstyle{definition}
\newtheorem{definition}[theorem]{Definition}
\newtheorem{pclaim}[theorem]{Claim}
\theoremstyle{remark}
\newtheorem{remark}[theorem]{Remark}
\newenvironment{rmenum}{
\begin{enumerate}

}
{\end{enumerate}}
\newcommand{\gpart}[1]{\mathcal{P}(#1)}
\newcommand{\pargpart}[2]{\mathcal{P}_{#1}(#2)}
\newcommand{\parvcoup}[2]{{}^{\top}U_{#1}(#2)}
\newcommand{\dmcomp}[1]{\mathcal{D}(#1)}
\newcommand{\cdmcomp}[1]{\mathcal{D}^+(#1)}
\newcommand{\idmcomp}[1]{\mathcal{D}^-(#1)}
\newcommand{\admcomp}[2]{\mathcal{D}^{#2}(#1)}
\newcommand{\dmo}[1]{\leq^{\circ}_{#1}} 
\newcommand{\dm}[1]{\leq_{#1}} 
\newcommand{\gdm}{\preceq} 
\newcommand{\gdmo}{\preceq^{\circ}} 
\newcommand{\nono}{\smile^{\circ}} 
\newcommand{\non}{\smile} 
\newcommand{\gdmposet}[1]{\mathcal{O}(#1)}
\newcommand{\parNei}[2]{N_{#1}(#2)}
\newcommand{\parcut}[2]{\delta_{#1}(#2)}
\newcommand{\yield}{\triangleleft}
\newcommand{\ecomp}[1]{\mathcal{G}(#1)}
\newcommand{\const}[1]{\mathcal{G}^+(#1)}
\newcommand{\inconst}[1]{\mathcal{G}^-(#1)}
\newcommand{\ainconst}[2]{\mathcal{G}^-_{#2}(#1)}
\newcommand{\gsim}[1]{\sim_{#1}} 
\newcommand{\defi}[1]{\mathrm{def}(#1)}
\newcommand{\parup}[2]{\mathcal{U}_{#1}(#2)}
\newcommand{\parupstar}[2]{\mathcal{U}^{*}_{#1}(#2)}
\newcommand{\vparup}[2]{U_{#1}(#2)}
\newcommand{\vparupstar}[2]{U^*_{#1}(#2)}
\newcommand{\ds}[1]{K(#1)}
\newcommand{\sd}[1]{\pi(#1)}
\title[]{Nonbipartite Dulmage-Mendelsohn Decomposition for Berge Duality}
\author{Nanao Kita}
\address{National Institute of Informatics
2-1-2 Hitotsubashi, Chiyoda-ku, Tokyo, Japan 101-8430}
\email{kita@nii.ac.jp}
\date{\today}
\begin{document}

\begin{abstract} 
The {\em Dulmage-Mendelsohn decomposition} is a classical {\em canonical decomposition} in matching theory applicable for bipartite graphs, and is famous not only for its application in the field of matrix computation, but also for providing a prototypal structure in  matroidal optimization theory. 
The Dulmage-Mendelsohn decomposition is stated and proved using the two color classes, and therefore generalizing this decomposition for nonbipartite graphs has been a difficult task. 
In this paper, we obtain a new canonical decomposition that is a generalization of the Dulmage-Mendelsohn decomposition for arbitrary graphs, using a recently introduced tool in matching theory, the {\em basilica decomposition}. 
Our result enables us to understand all known canonical decompositions in a unified way. 
Furthermore, we apply our result to derive a new theorem regarding {\em barriers}. 
The duality theorem for the  maximum matching problem is the celebrated {\em Berge formula}, in which dual optimizers are known as barriers. 
Several results regarding maximal barriers have been derived 
by known canonical decompositions, 
however no characterization has been known for general graphs. 
In this paper, we provide a characterization of the family of {\em maximal barriers} in general graphs, 
in which the known results are developed and unified. 

\end{abstract} 

\maketitle

\section{Introduction} 

We establish the Dulmage-Mendelsohn decomposition for general graphs. 
The  {\em Dulmage-Mendelsohn decomposition}~\cite{dm1958, dm1959, dm1963},  
or the {\em DM decomposition} in short,  is a classical canonical decomposition in matching theory~\cite{lovasz2009matching} 
 applicable for bipartite graphs. 
This decompositions is famous for its application for combinatorial matrix theory, especially, for providing an  efficient solution for a system of linear equations~\cite{dm1963, duff1986direct}, 
and is also  important in matroidal optimization theory.

{\em Canonical decompositions} of a graph are fundamental tools in matching theory~\cite{lovasz2009matching}. 
A canonical decomposition partitions a given graph 
in a way uniquely determined for the graph, 
and describes the structure of maximum matchings  using this partition. 
The classical canonical decompositions  
are the {\em Gallai-Edmonds}~\cite{gallai1964, edmonds1965} and {\em Kotzig-Lov\'asz} decompositions~\cite{kotzig1959a, kotzig1959b, kotzig1960}, 
in addition to the DM decomposition. 
The DM and Kotzig-Lov\'asz decompositions 
are applicable for bipartite graphs and {\em factor-connected graphs}, respectively. 
The Gallai-Edmonds decomposition partitions an arbitrary graph 
into three parts,  that is, the so-called $D(G)$, $A(G)$, and $C(G)$ parts. 
Comparably recently, a new canonical decomposition was proposed: 
the {\em basilica decomposition}~\cite{DBLP:conf/isaac/Kita12, DBLP:journals/corr/abs-1205-3816}. This decomposition is applicable for arbitrary graphs 
and contains a generalization of the Kotzig-Lov\'asz decomposition 
and a refinement the Gallai-Edmonds decomposition. 
(The $C(G)$ part can be  decomposed nontrivially.)

In this paper, 
using the basilica decomposition, 
we establish an analogue of the DM decomposition 
for general graphs. 
Our result accordingly provides a paradigm 
that enables us to handle any graph 
and understand the known canonical decompositions in a unified way. 
In the theory of original DM decomposition, 
the concept of the {\em DM components} of a bipartite graph 
is first defined, 
and then it is proved that these components form a poset 
with respect to a certain binary relation. 

This theory is heavily depend on the two color classes of a bipartite graph 
and cannot be easily generalized for nonbipartite graphs. 
In our generalization, 
we first define a generalization of the DM components 
using the basilica decomposition. 
To capture the structure formed by these components in nonbipartite graphs, 
we introduce a little more complexed concept:  {\em posets with a transitive forbidden relation}. 
We then prove that the generalized DM components 
form a poset with a transitive forbidden relation 
 for certain binary relations.

Using this generalized DM decomposition, 
we derive a characterization of the family of {\em maximal barriers}  
in general graphs. 
The {\em Berge formula} is a combinatorial min-max theorem, 
 in which maximum matchings are the optimizers of one hand, 
 and the optimizers of the other hand are known as {\em barriers}~\cite{lovasz2009matching}. 
 That is, barriers are the dual optimizers of the maximum matchings problem. 
Barriers are heavily employed as a tool 
for studying matchings. 
However, not so much is known about barriers themselves~\cite{lovasz2009matching}. 
Aside from several observations that are derived rather easily from the Berge formula, 
several substantial results are known about (inclusion-wise) maximal barriers, 
which are provided by canonical decompositions.

Our result for maximal barriers 
shows a reasonable consistency 
regarding our generalization of the DM decomposition, 
considering the relationship between 
each known canonical decomposition and maximal barriers. 
Each known canonical decomposition 
can be used to state the structure of maximal barriers. 
The original DM decomposition 
provides a characterization of the family of maximal barriers in bipartite graphs 
 in terms of ideals in the poset; 
 minimum vertex covers in  bipartite graphs are equivalent to maximal barriers. 
The Gallai-Edmonds decomposition 
derives a characterization of the intersection of all maximal barriers 
(that is, the $A(G)$ part)~\cite{lovasz2009matching}; 
this characterization is known as the {\em Gallai-Edmonds description}. 
The Kotzig-Lov\'asz decomposition 
is used for characterizing  the family of maximal barriers 
in factor-connected graphs~\cite{lovasz2009matching}; 
this result is known as {\em Lov\'asz's canonical partition theorem}~\cite{lovasz1972structure, lovasz2009matching}. 
The basilica decomposition provides 
 the structure of a given maximal barrier in general graphs, 
which contains a common generalization of 
the Gallai-Edmonds description and Lov\'asz's canonical partition theorem. 
Hence, a generalization of the DM decomposition 
would be reasonable if it can characterize the family of maximal barriers, 
and our generalization attains this in a way analogical to the classical DM decomposition, 
that is, in terms of  ideals in the poset with a transitive forbidden relation.

Our results may imply a new possibility in matroidal optimization theory. 
In submodular function theory, the bipartite maximum matching problem is an important exemplary problem, 
and the DM decomposition therefore has a special role in this theory. 
Our  nonbipartite DM decomposition 
may be a clue to a new phase of  submodular function theory 
that can be brought in by capturing these concepts.

The remainder of this paper is organized as follows: 
In Section~\ref{sec:notation}, we explain the basic definitions. 
In Section~\ref{sec:basilica}, we present the preliminary results 
from the basilica decomposition theory. 
In Section~\ref{sec:ptfp}, 
we introduce the new concept of   posets with a transitive forbidden relation. 
In Section~\ref{sec:gdm}, 
we provide our main result, the nonbipartie DM decomposition. 
In Section~\ref{sec:barrierpre}, 
we present preliminary definitions and results regarding barriers. 
We then prove in Section~\ref{sec:barrier}  that our generalization of the DM decomposition 
can be used to characterize the family of maximal barriers. 
In Section~\ref{sec:original}, 
we show how our result contains the original DM decomposition 
for bipartite graphs. 
In Section~\ref{sec:alg}, we remark that our nonbipartite DM decomposition 
can be computed in polynomial time.

\section{Notation} \label{sec:notation} 
\subsection{General Definitions} 
For basic notation for sets, graphs, and algorithms, 
we mostly follow Schrijver~\cite{schrijver2003}. 
In this section, we explain exceptions or nonstandard definitions. 
In Section~\ref{sec:notation}, unless otherwise stated, 
let $G$ be a graph. 
The vertex set and the edge set of $G$ are denoted by $V(G)$ and $E(G)$, respectively. 
We treat paths and circuits as graphs. 
For a path $P$ and vertices $x$ and $y$ from $P$, 
$xPy$ denotes the subpath of $P$ between $x$ and $y$. 
The singleton set $\{x\}$ is often denoted by just $x$. 
We often treat a graph  as the set of its vertices.

\subsection{Graph Operations} 
In the remainder of this section, let $X\subseteq V(G)$. 
The subgraph of $G$ induced by $X$ is denoted by $G[X]$. 
The graph $G[V(G)\setminus X]$ is denoted by $G-X$. 
The contraction of $G$ by $X$ is denoted by $G/X$. 
Let $F\subseteq E(G)$. 
The graph obtained by deleting $F$ from $G$  
without removing vertices is denoted by $G - F$. 
Let $H$ be a subgraph of $G$. 
The graph obtained by adding $F$ to $H$ is denoted by $H + F$. 
Regarding these operations, 
we identify vertices, edges, subgraphs of the newly created graph 
with  the naturally corresponding items of old graphs.

\subsection{Functions on Graphs} 
A {\em neighbor} of $X$ is a vertex from $V(G)\setminus X$ that is adjacent to some vertex from $X$. 
The neighbor set of $X$ is denoted by $\parNei{G}{X}$. 
Let $Y\subseteq V(G)$. 
The set of edges joining $X$ and $Y$ is denoted by $E_G[X, Y]$. 
The set $E_G[X, V(G)\setminus X]$ is denoted by $\parcut{G}{X}$. 

\subsection{Matchings} 
A set $M \subseteq E(G)$  is a {\em matching} 
if $|\parcut{G}{v} \cap M | \le 1$ holds for each $v\in V(G)$. 
For a matching $M$, we say that $M$ {\em covers} a vertex $v$ if $|\parcut{G}{v} \cap M | = 1$; 
otherwise, we say that $M$ {\em exposes} $v$. 
A matching is {\em maximum} if it consists of the maximum number of edges. 
A graph can possess an exponentially large number of matchings. 
A matching is {\em perfect} 
 if it covers every vertex. 
A graph is {\em factorizable} if it has a perfect matchings. 
A graph is {\em factor-critical}  
if, for each vertex $v$, $G-v$ is factorizable. 
A graph with only one vertex is defined to be factor-critical. 
The number of edges in  a maximum matching is denoted by $\nu(G)$. 
The number of vertices exposed by a maximum matching 
is denoted by $\defi{G}$; that is, $\defi{G} := |V(G)| -2\nu(G)$. 

\subsection{Alternating Paths and Circuits} 

Let $M\subseteq E(G)$. 
A circuit or path is said to be {\em $M$-alternating} 
if edges in $M$ and not in $M$ appear alternately. 
The precise definition is the following: 
A circuit $C$ of $G$ is $M$-alternating if $E(C)\cap M$ 
is a perfect matching of $C$. 
We define the three types of $M$-alternating paths. 
Let $P$ be a path with ends $s$ and $t$. 
We say that $P$ is {\em $M$-forwarding} from $s$ to $t$ 
 if $M\cap E(P)$ is a matching of $P$ 
 that covers every vertex except for $t$. 
We say that $P$ is {\em $M$-saturated } between $s$ and $t$ 
 if $M\cap E(P)$ is a perfect matching of $P$. 
We say that $P$ is {\em  $M$-exposed } between $s$ and $t$ 
 if $M\cap E(P)$ is a matching of $P$ that covers every vertex except for $s$ and $t$. 
Any path with exactly one vertex $x$ is defined to be an $M$-forwarding path from $x$ to $x$, 
and is never treated as an $M$-exposed path. 
Any $M$-forwarding path has an even number of edges, which can be zero, 
whereas any $M$-saturated or -exposed path has an odd number of edges.

A path $P$ is an {\em ear} relative to $X$ 
if the internal vertices of $P$ are disjoint from $X$, 
whereas the ends are in $X$. 
A circuit $C$ is an {\em ear} relative to $X$  
if exactly one vertex of $C$ is in $X$; 
for simplicity, we call the vertex in $X\cap V(C)$ the {\em end } of the ear $C$. 
We call an ear $P$ relative to $X$ an {\em $M$-ear} 
if $P - X$ is empty or an $M$-saturated path, and $\parcut{P}{X} \cap M = \emptyset$.

\subsection{Berge Formula and Barriers} 

We now explain the Berge Formula and the definition of barriers. 
An {\em odd component} (resp. {\em even component}) of a graph is a connected component with an odd (resp. even) number of vertices. 
The number of odd components of $G - X$ is denoted by $q_G(X)$. 
The set of vertices from odd components (resp. even components) of $G-X$ is denoted by $D_X$ 
(resp. $C_X$). 

\begin{theorem}[Berge Formula~\cite{lovasz2009matching}]
For a graph $G$, 
$\defi{G}$ is equal to the maximum value of $q_G(X) - |X|$, 
where $X$ is taken over all subsets of $V(G)$. 
\end{theorem}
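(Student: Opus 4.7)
The plan is to prove both directions of the equality $\defi{G} = \max_{X \subseteq V(G)} (q_G(X) - |X|)$.

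First, for the upper bound $q_G(X) - |X| \leq \defi{G}$ holding for every $X \subseteq V(G)$, I would argue by a direct parity count. Fix a maximum matching $M$ and an arbitrary $X \subseteq V(G)$. Each odd component $K$ of $G - X$ has $|V(K)|$ odd, so $M \cap E(K)$ cannot be a perfect matching of $K$; hence $K$ contributes either a vertex exposed by $M$ or a vertex matched by $M$ across $\parcut{G}{X}$. Since distinct edges of $M$ have disjoint endpoints, at most $|X|$ odd components can be matched into $X$ by $M$, so at least $q_G(X) - |X|$ of them harbor a vertex exposed by $M$, yielding $\defi{G} \geq q_G(X) - |X|$.

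For the matching lower bound, I would induct on $|V(G)|$. The base case $V(G) = \emptyset$ is trivial. For the inductive step, I would split on whether $G$ admits an \emph{essential} vertex, that is, one that is covered by every maximum matching. If $v$ is essential, then $\nu(G - v) = \nu(G) - 1$, so $\defi{G - v} = \defi{G} + 1$; by the inductive hypothesis applied to $G - v$, there is $X' \subseteq V(G) \setminus \{v\}$ with $q_{G-v}(X') - |X'| = \defi{G} + 1$. Taking $X := X' \cup \{v\}$ gives $G - X = (G - v) - X'$, so $q_G(X) - |X| = q_{G-v}(X') - (|X'| + 1) = \defi{G}$. If, on the other hand, no essential vertex exists, then every $v$ is exposed by some maximum matching, which means $\nu(G - v) = \nu(G)$ and hence $\defi{G - v} = \defi{G} - 1$ for all $v$; by Gallai's lemma applied componentwise, every connected component of $G$ is factor-critical, and then $X = \emptyset$ works because $q_G(\emptyset)$ counts the components, each odd and each contributing exactly $1$ to $\defi{G}$.

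The main obstacle is the no-essential-vertex case, where Gallai's lemma is needed to upgrade ``every vertex is missable by some maximum matching'' to ``each connected component is factor-critical.'' This lemma is the technical heart of the argument; once invoked, the rest of the reasoning is bookkeeping about how adjoining or removing a single vertex from $X$ shifts $q_G(X)$ and $|X|$ by compatible amounts.
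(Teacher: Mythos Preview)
Your argument is correct and follows one of the standard textbook proofs of the Berge--Tutte formula: the easy direction by the pigeonhole count on odd components, and the hard direction by induction on $|V(G)|$, splitting on the existence of an essential vertex and invoking Gallai's lemma in the case where every vertex is missable.

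Note, however, that the paper does not supply a proof of this statement at all. The Berge formula appears in Section~\ref{sec:notation} as a cited preliminary result from Lov\'asz and Plummer~\cite{lovasz2009matching}; it is used to define barriers and is taken as background for everything that follows. So there is nothing to compare your proof against within the paper itself---your write-up simply fills in a proof that the paper assumes as known.
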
 

The set of vertices that attains the maximum value in this relation 
is called a {\em barrier}. 
That is, a set of vertices $X$ is a {\em barrier} 
if $\defi{G} = q_G(X) - |X|$.

\subsection{Gallai-Edmonds Family and Structure Theorem}

The set of vertices that can be exposed by some maximum matchings 
is denoted by $D(G)$. 
The neighbor set of $D(G)$ is denoted by $A(G)$, 
and the set $V(G)\setminus D(G) \setminus A(G)$ is denoted by $C(G)$. 
The following statement about $D(G)$, $A(G)$, and $C(G)$ 
is the celebrated {\em Gallai-Edmonds structure theorem}~\cite{lovasz2009matching, gallai1964, edmonds1965}.

\begin{theorem}[Gallai-Edmonds Structure Theorem] \label{thm:ge}
For any graph $G$,  
\begin{rmenum}
\item \label{item:ge:barrier} $A(G)$ is a barrier for which $D_{A(G)} = D(G)$ and $C_{A(G)} = C(G)$; 
\item \label{item:ge:fc} each odd component of $G-A(G)$ is factor-critical; and, 
\item \label{item:ge:allowed}  every edge in $E_G[A(G), D(G)]$ is allowed, 
whereas no edge in $E_G[A(G), A(G)\cup C(G)]$ is allowed. 
\end{rmenum} 
\end{theorem}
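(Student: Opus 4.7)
My strategy is the classical one based on \emph{Gallai's Lemma}: if $H$ is a connected graph with $D(H) = V(H)$, then $H$ is factor-critical. This lemma is the technical heart of the argument, and I would prove it separately by induction on $|V(H)|$. For the inductive step, given $v \in V(H)$, take a maximum matching $M^{*}$ of $H - v$ and a maximum matching $N$ of $H$ that exposes $v$ (which exists since $v \in D(H)$); the symmetric difference $M^{*} \triangle N$ decomposes into paths and even circuits, and careful analysis of the endpoints of these paths, combined with $D(H) = V(H)$, forces $|M^{*}| = \nu(H)$ and $H - v$ to have a perfect matching.

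Given Gallai's Lemma, I would first derive (ii). Fix a connected component $K$ of $G[D(G)]$; the plan is to show $V(K) \subseteq D(K)$ and then invoke Gallai's Lemma on $K$. Given $v \in V(K)$, take a maximum matching $M$ of $G$ exposing $v$. The key step is showing that $M \cap E(K)$ is a maximum matching of $K$: otherwise an augmenting path for $M \cap E(K)$ lives inside $K$, and swapping it into $M$ produces a matching of $G$ strictly larger than $M$, contradicting maximality. Hence $v$ is exposed by a maximum matching of $K$, so $K$ satisfies the hypothesis of Gallai's Lemma and is factor-critical; in particular $|V(K)|$ is odd. Since $\parNei{G}{D(G)} \subseteq A(G)$ by the definition of $A(G)$, each such $K$ is also a connected component of $G - A(G)$.

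Part (i) and the whole of (iii) I would then derive via a counting argument using the Berge formula. Let $c$ be the number of connected components of $G[D(G)]$, each odd by (ii). Any maximum matching $M$ leaves at most one vertex exposed per such component and matches every $a \in A(G)$; combining with $\defi{G} \geq q_G(A(G)) - |A(G)| \geq c - |A(G)|$ (from Berge applied to $X = A(G)$) and the evident upper bound on $|M|$ yields $\defi{G} = c - |A(G)|$, so $A(G)$ is a barrier with $D_{A(G)} = D(G)$ and $C_{A(G)} = C(G)$ (the even components of $G - A(G)$ are exactly the components of $G[C(G)]$, which therefore each possess a perfect matching by $M$). For the first half of (iii), an edge $au \in E_G[A(G), D(G)]$ is allowed: assemble a maximum matching containing $au$ from a perfect matching of $K - u$ (by factor-criticality of the component $K \ni u$), a perfect matching of $G[C(G)]$, and a matching sending $A(G) \setminus \{a\}$ into distinct remaining components of $G[D(G)]$, where the last exists by the tight counting just established. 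Conversely, an edge $ab$ incident to $A(G)$ with $b \in A(G) \cup C(G)$ cannot lie in any maximum matching: including $ab$ removes $a$ from the $A(G)$-to-$D(G)$ pool and forces at least one additional exposed vertex in $G[D(G)]$, contradicting $\defi{G} = c - |A(G)|$.

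The principal obstacle is Gallai's Lemma together with the exchange step in (ii): one must argue carefully that augmenting paths inside a component $K$ lift to augmentations in $G$, which requires controlling how $M$-edges leave $V(K)$ (their outside endpoints lie in $A(G)$ by definition, and a standard re-pairing shows they do not obstruct the internal augmentation). Once these technical points are secured, parts (i) and (iii) follow by careful counting and assembly using the Berge formula.
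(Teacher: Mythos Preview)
The paper does not prove this theorem: it is stated as the classical Gallai--Edmonds Structure Theorem and attributed to the literature \cite{lovasz2009matching, gallai1964, edmonds1965}, with no proof given. There is therefore nothing in the paper to compare your proposal against.

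That said, your outline is essentially the textbook route via Gallai's Lemma, and as such is fine in broad strokes. One point deserves caution: your argument for (ii) asserts that if $M$ is a maximum matching of $G$ exposing $v \in V(K)$, then $M \cap E(K)$ is a maximum matching of $K$. This is not immediate, because vertices of $K$ other than $v$ may be $M$-matched into $A(G)$, so $M \cap E(K)$ can expose several vertices of $K$; an augmenting path for $M \cap E(K)$ inside $K$ need not lift to an $M$-augmenting path in $G$, since its endpoints may already be $M$-covered by edges leaving $K$. You flag this at the end as requiring a ``standard re-pairing,'' but the standard way to close this gap is the \emph{Stability Lemma} (for $a \in A(G)$ one has $D(G-a)=D(G)$), which lets you peel off $A(G)$ one vertex at a time until Gallai's Lemma applies directly to each component of $G[D(G)]$. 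Without that lemma or an explicit re-pairing argument, the step in (ii) is incomplete.
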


\subsection{Factor-Components}

An edge is {\em allowed} if it is contained in some maximum matching. 
Two vertices are {\em factor-connected} 
if they are connected by a path whose edges are allowed. 
A subgraph is {\em factor-connected} 
if any two vertices are factor-connected. 
A maximal factor-connected subgraph 
is called a {\em factor-connected component} or {\em factor-component}. 
A graph consists of its factor-components and edges joining them that are not allowed. 
The set of factor-components of $G$ is denoted by $\ecomp{G}$.

A factor-component $C$ is {\em inconsistent} if $V(C)\cap D(G) \neq \emptyset$. 
Otherwise, $C$ is said to be {\em consistent}. 
We denote the sets of consistent and inconsistent factor-components of $G$ 
by $\const{G}$ and $\inconst{G}$, respectively. 
The next property is easily confirmed from the Gallai-Edmonds structure theorem.

\begin{fact} \label{fact:inconst2ge}
A subgraph $C$ of $G$ is an inconsistent factor-component 
if and only if 
$C$ is a connected component of $G[D(G)\cup A(G)]$. 
Any consistent factor-component has the vertex set contained in $C(G)$. 
\end{fact}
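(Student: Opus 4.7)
The plan is to derive both parts of the statement directly from Theorem~\ref{thm:ge}, handling consistent and inconsistent factor-components separately by tracking the allowed-edge paths that witness factor-connectedness and classifying their endpoints among $D(G)$, $A(G)$, and $C(G)$.

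I will first dispatch the second sentence. Let $C$ be a consistent factor-component, so $V(C)\cap D(G)=\emptyset$. If some $a\in V(C)\cap A(G)$, then by the definition $A(G)=\parNei{G}{D(G)}$ there exists $d\in D(G)$ adjacent to $a$, and by Theorem~\ref{thm:ge}\ref{item:ge:allowed} the edge $ad$ is allowed; hence $d$ is factor-connected to $a$ and lies in $V(C)$, contradicting consistency. Therefore $V(C)\cap A(G)=\emptyset$ and $V(C)\subseteq C(G)$.

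For the inconsistent case, given an inconsistent factor-component $C$, I would start from an arbitrary $u\in V(C)\cap D(G)$ and induct along an allowed-edge path from $u$ to any other vertex of $V(C)$. Using Theorem~\ref{thm:ge}\ref{item:ge:allowed}, every visited vertex stays inside $D(G)\cup A(G)$: from a vertex of $D(G)$, any adjacent vertex lies in $D(G)\cup A(G)$ because $A(G)$ is the neighbor set of $D(G)$; from a vertex of $A(G)$, an allowed edge cannot lead into $A(G)\cup C(G)$, so it must lead into $D(G)$. Hence $V(C)\subseteq D(G)\cup A(G)$, and since the witnessing paths lie in $G[D(G)\cup A(G)]$, $V(C)$ induces a connected subgraph there.

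The main obstacle is upgrading this inclusion to an equality: showing that $V(C)$ coincides with a full connected component of $G[D(G)\cup A(G)]$. For this I would invoke Theorem~\ref{thm:ge}\ref{item:ge:fc} to make each odd component of $G-A(G)$ factor-critical, and hence, by a routine rearrangement of near-perfect matchings, internally factor-connected via allowed edges. Coupled with Theorem~\ref{thm:ge}\ref{item:ge:allowed} on the allowed $A(G)$--$D(G)$ bridges, this identifies the inconsistent factor-components as precisely the subgraphs obtained by taking each odd component of $G-A(G)$ together with its adjacent $A(G)$-vertices and merging groups that share such an $A(G)$-vertex, which is the same partition as the connected components of $G[D(G)\cup A(G)]$. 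This yields the desired equivalence.
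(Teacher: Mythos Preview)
Your argument for the second sentence and for the inclusion $V(C)\subseteq D(G)\cup A(G)$ when $C$ is inconsistent is correct. The gap is in the very last step, where you assert that the partition obtained by merging odd components of $G-A(G)$ through shared $A(G)$-neighbours ``is the same partition as the connected components of $G[D(G)\cup A(G)]$.'' Connectivity in $G[D(G)\cup A(G)]$ can also use edges with both ends in $A(G)$, and Theorem~\ref{thm:ge}\ref{item:ge:allowed} tells you precisely that such edges are \emph{not} allowed. Hence two vertices of $A(G)$ can lie in the same connected component of $G[D(G)\cup A(G)]$ while belonging to different factor-components, and your two partitions need not coincide.

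Concretely, take $V(G)=\{a_1,a_2,v_1,v_2,v_3,v_4\}$ with edges $a_1v_1,\ a_1v_2,\ a_2v_3,\ a_2v_4,\ a_1a_2$. Here $D(G)=\{v_1,v_2,v_3,v_4\}$, $A(G)=\{a_1,a_2\}$, the edge $a_1a_2$ lies in no maximum matching, and the inconsistent factor-components are $G[\{a_1,v_1,v_2\}]$ and $G[\{a_2,v_3,v_4\}]$; yet $G[D(G)\cup A(G)]=G$ is connected. So the biconditional, read literally, fails, and your final step cannot be repaired. (The paper gives no argument beyond the remark that the fact is ``easily confirmed'' from Theorem~\ref{thm:ge}; what is actually used downstream is only that the vertex sets of inconsistent factor-components lie in $D(G)\cup A(G)$ and those of consistent ones in $C(G)$, both of which you have established correctly.)
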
 

That is, the structure of inconsistent factor-components 
are rather trivial under the Gallai-Edmonds structure theorem.

\section{Basilica Decomposition of Graphs} \label{sec:basilica}

\subsection{Central Concepts} \label{sec:basilica:main}

In Section~\ref{sec:basilica}, we introduce the basilica decomposition of graphs~\cite{DBLP:conf/isaac/Kita12, DBLP:journals/corr/abs-1205-3816}. 
The theory of basilica decomposition is made up of 
the three central concepts: 
\begin{rmenum}
\item a canonical partial order between factor-components (Theorem~\ref{thm:order}), 
\item the  general Kotzig-Lov\'asz decomposition (Theorem~\ref{thm:gkl}), and 
\item an interrelationship between the two (Theorem~\ref{thm:cor}). 
\end{rmenum} 
In Section~\ref{sec:basilica:main}, 
we explain these three concepts 
and give the definition of the basilica decomposition. 
Every statement in the following  are from Kita~\cite{DBLP:conf/isaac/Kita12, DBLP:journals/corr/abs-1205-3816}%
~\footnote{The essential part of the structure described by the basilica decomposition 
lies in the factorizable graph $G[C(G)]$. 
Therefore, statements for factorizable graphs~\cite{DBLP:conf/isaac/Kita12, DBLP:journals/corr/abs-1205-3816}  
can be straightforwardly generalized for arbitrary graphs 
under the Gallai-Edmonds structure theorem.} 
In Section~\ref{sec:basilica}, let $G$ be a graph unless otherwise stated.

\begin{definition} 
We call a set $X\subseteq V(G)$ {\em separating} 
if it is the disjoint union of  vertex sets of some factor-components.
For $G_1, G_2\in\ecomp{G}$, 
we say $G_1 \yield G_2$ 
if there exists a separating set $X\subseteq V(G)$ with $V(G_1)\cup V(G_2)\subseteq X$ 
such that $G[X]/G_1$ is a factor-critical graph. 
\end{definition} 

\begin{theorem} \label{thm:order} 
For a graph $G$, 
the binary relation $\yield$ is a partial order over $\ecomp{G}$. 
\end{theorem}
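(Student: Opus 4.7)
The plan is to verify the three defining properties of a partial order: reflexivity, antisymmetry, and transitivity of $\yield$ on $\ecomp{G}$. Reflexivity is immediate: for any $G_1 \in \ecomp{G}$, the set $X := V(G_1)$ is separating (it is the vertex set of a single factor-component), and $G[X]/G_1$ is the one-vertex graph, which is factor-critical by the convention recorded in Section~\ref{sec:notation}. Hence $G_1 \yield G_1$.

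For antisymmetry, I would argue by contradiction: assume $G_1 \yield G_2$ and $G_2 \yield G_1$ with $G_1 \neq G_2$, via witnesses $X_{12}$ and $X_{21}$. The assumption $G_1 \yield G_2$ identifies $G_1$ as occupying a distinguished ``always-matched'' role inside $X_{12}$, because factor-criticality of $G[X_{12}]/G_1$ forces any near-perfect matching of $G[X_{12}]/G_1$ to have its exposed vertex absorbed into $V(G_1)$ after uncontraction, while the vertices of $V(G_2)$ are freely exposable by the lifted matchings. The symmetric assumption forces the opposite relationship on $V(G_1)$ versus $V(G_2)$. Working inside $G[X_{12} \cup X_{21}]$ and juxtaposing the two witnessing near-perfect matchings, the plan is to derive either a matching that contradicts the deficiency formula in the combined region, or an allowed-edge path connecting $V(G_1)$ and $V(G_2)$ that forces them into the same factor-component, contradicting distinctness; both routes lean on Theorem~\ref{thm:ge} and Fact~\ref{fact:inconst2ge}.

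For transitivity, which I expect to be the substantive step, suppose $G_1 \yield G_2$ via a separating set $X_{12}$ and $G_2 \yield G_3$ via a separating set $X_{23}$. The natural candidate for witnessing $G_1 \yield G_3$ is $X := X_{12} \cup X_{23}$. This set contains $V(G_1) \cup V(G_3)$ and is again separating, since the union of two collections of factor-component vertex sets is again such a union. The remaining task is to show that $G[X]/G_1$ is factor-critical, i.e. that $G[X]/G_1 - w$ admits a perfect matching for every vertex $w$ of $G[X]/G_1$. My plan is to construct such matchings by pasting across the shared factor-component $G_2$: for a target exposed vertex $w$, combine a near-perfect matching of $G[X_{12}]/G_1$ missing a suitably chosen vertex of $V(G_2)$ with a near-perfect matching of $G[X_{23}]/G_2$ missing either $w$ itself (when $w$ lies in $X_{23}$ and is not the contracted vertex $G_2$) or a chosen vertex of $V(G_2)$ (otherwise), then uncontract the two sides consistently on $V(G_2)$ using the internal matching structure of $G_2$ provided by Theorem~\ref{thm:ge} and Fact~\ref{fact:inconst2ge}.

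The main obstacle is precisely this transitivity step. Pasting two factor-critical contracted graphs across a shared non-trivial factor-component $G_2$ requires coordinating the two near-perfect matchings on $V(G_2)$, where one side treats $V(G_2)$ as a set of ordinary uncontracted vertices and the other collapses it to a single point. A case analysis on the location of $w$ --- inside $V(G_2)$, inside $X_{12} \setminus X_{23}$, or inside $X_{23} \setminus X_{12}$ --- together with a parity and compatibility argument driven by the Gallai-Edmonds structure of $G_2$ (factor-criticality of its $D(G)$-part and factorizability along its $C(G)$-part) is where the real work lies, and where the machinery of the basilica decomposition is most needed.
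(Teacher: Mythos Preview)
The paper does not prove Theorem~\ref{thm:order}. Section~\ref{sec:basilica:main} explicitly states that ``Every statement in the following are from Kita~\cite{DBLP:conf/isaac/Kita12, DBLP:journals/corr/abs-1205-3816}'', and Theorem~\ref{thm:order} is one of those imported preliminaries on the basilica decomposition. There is therefore no in-paper proof to compare your proposal against; the theorem is quoted, not established, here.

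As for the proposal itself, reflexivity is fine, but both antisymmetry and transitivity are currently only plans, not arguments. For antisymmetry you gesture at ``deriving either a matching that contradicts the deficiency formula \ldots\ or an allowed-edge path connecting $V(G_1)$ and $V(G_2)$'', but you have not shown how the two witnesses $X_{12}$ and $X_{21}$ actually interact to force either outcome; the informal claim that $V(G_1)$ is ``always-matched'' inside $X_{12}$ while $V(G_2)$ is ``freely exposable'' needs to be made precise and then confronted with the symmetric claim on $X_{21}$, and it is not clear that passing to $X_{12}\cup X_{21}$ preserves enough structure for a contradiction.

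For transitivity, the gluing scheme over $X := X_{12}\cup X_{23}$ has a real obstacle you have not addressed: $X_{12}$ and $X_{23}$ may share factor-components other than $G_2$, so the two near-perfect matchings you intend to paste can conflict on $X_{12}\cap X_{23}\setminus V(G_2)$, not only on $V(G_2)$. Even restricting attention to $V(G_2)$, one side sees $G_2$ as a contracted point and the other as a full subgraph, and your appeal to ``the Gallai--Edmonds structure of $G_2$'' does not by itself guarantee that the uncontracted matchings can be made to agree on $V(G_2)$ while exposing exactly the prescribed vertex $w$. The original proof in Kita's papers handles these compatibility issues with dedicated structural lemmas on separating sets and factor-critical contractions; your sketch would need analogous machinery before it constitutes a proof.
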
 

\begin{definition} 
For $u, v\in V(G)\setminus D(G)$, 
we say $u\gsim{G} v$ if $u$ and $v$ are identical or 
if $u$ and $v$ are factor-connected and satisfy $\defi{G-u-v} > \defi{G}$. 
\end{definition} 

\begin{theorem} \label{thm:gkl} 
For a graph $G$, the binary relation $\gsim{G}$ is an equivalence relation. 
\end{theorem}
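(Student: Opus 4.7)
Reflexivity and symmetry are immediate: reflexivity is written into the definition, and the defining predicate is symmetric in $u,v$ because factor-connectivity is a symmetric relation and $\defi{G-u-v}=\defi{G-v-u}$. The work is in transitivity. Fix pairwise distinct $u,v,w$ with $u\gsim{G}v$ and $v\gsim{G}w$. Factor-connectivity is itself an equivalence relation (it is connectedness in the spanning subgraph of allowed edges), so $u$ and $w$ lie in a common factor-component $H$; the only remaining task is to prove $\defi{G-u-w}>\defi{G}$.

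The plan is to reduce this inequality to a statement about an elementary (factor-connected factorizable) graph and then invoke the classical Kotzig--Lov\'asz transitivity. By the Gallai--Edmonds Structure Theorem (Theorem~\ref{thm:ge}) and Fact~\ref{fact:inconst2ge}, $H$ is either \emph{inconsistent} (a connected component of $G[D(G)\cup A(G)]$, forcing $u,v,w\in A(G)$) or \emph{consistent} (with $V(H)\subseteq C(G)$, in which case $H$ is elementary). In the inconsistent case I take $A(G)\setminus\{u,w\}$ as a barrier in $G-u-w$: since deleting it from $G-u-w$ leaves the same graph as deleting $A(G)$ from $G$, one obtains $q_{G-u-w}(A(G)\setminus\{u,w\})=q_G(A(G))$, whence $\defi{G-u-w}\geq\defi{G}+2$; this case does not even use $v$. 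In the consistent case, a short size-counting of maximum matchings using Theorem~\ref{thm:ge}\,(ii) shows that for distinct $x,y\in V(H)$ one has $\defi{G-x-y}-\defi{G}\in\{0,2\}$, with the value $2$ occurring exactly when $H-x-y$ admits no perfect matching. Applied to each pair among $u,v,w$, this translates the entire question into the classical statement: \emph{in an elementary graph $H$, if neither $H-u-v$ nor $H-v-w$ has a perfect matching, then $H-u-w$ has no perfect matching.}

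For the classical statement I argue by contradiction. Suppose $N$ is a PM of $H-u-w$ and let $M$ be any PM of $H$. Since $M$ covers all of $u,v,w$ while $N$ covers only $v$, the only odd-degree vertices of $M\triangle N$ are $u$ and $w$, so $M\triangle N$ is a single $M$-alternating path $P$ from $u$ to $w$ together with some alternating cycles. Toggling $M$ along any cycle through $v$ keeps it a PM and removes that cycle from the symmetric difference, so I may assume $v$ is either internal to $P$ or incident to no edge of $M\triangle N$. If $v$ is internal to $P$, split $P$ at $v$ into halves $P_1=uPv$ and $P_2=vPw$; exactly one of them terminates at $v$ with an $M$-edge, and toggling $M$ along that half produces a PM of $H-u-v$ or of $H-v-w$, contradicting a hypothesis. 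In the residual case the edges at $v$ in $M$ and in $N$ coincide; here I invoke the elementarity of $H$---every edge lies in some PM---to replace $M$ by a PM $M^*$ of $H$ matching $v$ to a different neighbor, so that $v$ now lies on the alternating $u$--$w$ path of $M^*\triangle N$ and the previous case applies. The main obstacle is this residual subcase: producing $M^*$ uses factor-connectivity of $H$ nontrivially via the ear-decomposition structure of elementary graphs underlying Theorem~\ref{thm:order}, and is the combinatorial heart inherited from Kita~\cite{DBLP:conf/isaac/Kita12, DBLP:journals/corr/abs-1205-3816}.
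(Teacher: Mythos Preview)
The paper does not prove Theorem~\ref{thm:gkl}; it is quoted as a preliminary result from Kita~\cite{DBLP:conf/isaac/Kita12, DBLP:journals/corr/abs-1205-3816}, with the footnote indicating that the general statement follows from the factorizable case applied to $G[C(G)]$ together with the Gallai--Edmonds theorem. Your overall architecture---handle inconsistent factor-components via the barrier $A(G)\setminus\{u,w\}$, and reduce the consistent case to the classical Kotzig--Lov\'asz statement for the elementary graph $H$---is exactly this route, and those reductions are sound.

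The genuine gap is in your ``residual case'' for the classical statement. After arranging that $M$ and $N$ agree at $v$, you propose to replace $M$ by some perfect matching $M^*$ with a different edge at $v$ and assert that $v$ then lies on the $u$--$w$ path of $M^*\triangle N$. This is not justified: in $M^*\triangle N$ the vertex $v$ has degree~$2$, but it may lie on an alternating \emph{cycle} rather than on the $u$--$w$ path. Toggling $M^*$ along that cycle restores the $N$-edge at $v$ and returns you to the residual case, so the argument can loop indefinitely. Elementarity alone does not let you force $v$ onto the path by this local swap.

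A correct way to close the case is to apply Gallai--Edmonds to $H-v$ rather than to manipulate symmetric differences. Since $H$ is elementary one has $C(H-v)=\emptyset$; the hypotheses $H-u-v$ and $H-v-w$ having no perfect matching say precisely that $u,w\notin D(H-v)$, hence $u,w\in A(H-v)$. As $A(H-v)$ is a barrier of $H-v$, removing $u$ and $w$ from it shows $\defi{H-u-v-w}\ge \defi{H-v}+2=3$. But if $N$ is a perfect matching of $H-u-w$ with $vv'\in N$, then $N\setminus\{vv'\}$ is a matching of $H-u-v-w$ exposing only $v'$, giving $\defi{H-u-v-w}=1$, a contradiction. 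This is the step you should substitute for the $M^*$ argument.
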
 

We denote as $\gpart{G}$ the family of equivalence classes determined by $\gsim{G}$. 
This family is known as the {\em general Kotzig-Lov\'asz decomposition} 
or just the {\em Kotzig-Lov\'asz decomposition} of $G$. 
From the definition of $\gsim{G}$, 
for each $H\in\ecomp{G}$,  the family $\{ S\in\gpart{G}: S\subseteq V(H) \}$ forms a partition of $V(H)\setminus D(G)$. 
We denote this family by $\pargpart{G}{H}$. 

Let $H\in \ecomp{G}$. 
The sets of strict and nonstrict upper bounds of $H$ are denoted 
by $\parup{G}{H}$ and $\parupstar{G}{H}$, respectively. 
The sets of vertices $\bigcup \{ V(I) : I\in\parup{G}{H}\}$  
and $\bigcup \{ V(I): I \in\parupstar{G}{H}\}$ are denoted by $\vparup{G}{H}$ 
and $\vparupstar{G}{H}$, respectively. 

\begin{theorem} \label{thm:cor}
Let $G$ be a graph, and let $H\in\ecomp{G}$. 
Then, for each connected component $K$ of $G[\vparup{G}{H}]$, 
there exists $S\in \pargpart{G}{H}$ such that 
$\parNei{G}{K} \cap V(H) \subseteq S$. 
\end{theorem}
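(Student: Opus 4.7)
The plan is to verify $u \gsim{G} v$ for any two distinct vertices $u, v \in V(H) \cap \parNei{G}{K}$; establishing this forces them into a common class $S \in \pargpart{G}{H}$, which is the desired conclusion. Since $H$ is factor-connected, $u$ and $v$ are factor-connected, so what remains is to prove $\defi{G-u-v} > \defi{G}$, equivalently $\nu(G-u-v) \le \nu(G)-2$. A first observation is that $u, v \notin D(G)$, which follows from Theorem~\ref{thm:ge} and Fact~\ref{fact:inconst2ge} together with the fact that $C(G)$ has no edges to $D(G)$: any factor-component adjacent to $V(H) \cap D(G)$ would have to be $H$ itself. In particular $\nu(G-u) = \nu(G)-1$, and the task reduces to proving $v \notin D(G-u)$.

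I would then argue by contradiction: suppose $v \in D(G-u)$, and take a maximum matching $M'$ of $G-u$, of size $\nu(G)-1$, that exposes $v$. Choose a $u$-$v$ path $P$ in $G$ whose interior lies entirely in $V(K)$; such $P$ exists because $K$ is a connected component of $G[\vparup{G}{H}]$ and both $u$ and $v$ lie in $\parNei{G}{K}$. The strategy is to locally reroute $M'$ inside each factor-component $I \in \parup{G}{H}$ traversed by $P$, using the factor-critical quotient $G[X]/H$ (for a separating set $X \supseteq V(H) \cup V(I)$ supplied by the definition of $\yield$), so that $P$ minus its $u$-endpoint becomes an $M'$-alternating path starting at $v$. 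Extending this alternation from the $K$-neighbor of $u$ to any further vertex of $G-u$ exposed by $M'$ yields an $M'$-augmenting path in $G-u$, contradicting the maximality of $M'$.

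The main obstacle will be the composition of local rerouting moves along $P$. Each factor-critical quotient permits a near-perfect matching of $G[X]/H$ exposing any prescribed vertex of $V(I)$, but the choices across consecutive factor-components must align at the bridge edges joining different factor-components (which are non-allowed and hence absent from $M'$) and with the prescribed exposure pattern at the interface with $H$. I expect to process the components met by $P$ in top-down $\yield$-order (using Theorem~\ref{thm:order}), applying the Gallai-Edmonds structure theorem at each stage to track which vertex sets may legitimately be left exposed. In effect, the argument exhibits, for any maximum matching $M'$ of $G-u$, a coherent rerouting of $M'$ on $\vparup{G}{H}$ that turns $P$ into the initial segment of an $M'$-augmenting path in $G-u$, thereby precluding $v \in D(G-u)$ and establishing $u \gsim{G} v$.
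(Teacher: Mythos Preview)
The paper does not supply a proof of Theorem~\ref{thm:cor}: it is quoted as a preliminary from Kita's earlier work on the basilica decomposition, cited at the opening of Section~\ref{sec:basilica}. So there is no in-paper argument to compare against, and your attempt must stand on its own.

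Your reduction is correct: it suffices to establish $u \gsim{G} v$ for any two $u,v \in V(H)\cap \parNei{G}{K}$, and once $u,v\notin D(G)$ this amounts to $v \notin D(G-u)$. The gap is in the closing contradiction. After rerouting $M'$ to a new matching you still speak of an $M'$-augmenting path and of ``any further vertex of $G-u$ exposed by $M'$'', but the alternating structure you built is with respect to the rerouted matching, whose exposed set need not coincide with that of $M'$. Worse, take $G$ factorizable: every maximum matching of $G-u$ then exposes exactly one vertex. If the rerouted matching is still maximum and still exposes $v$, there is no second exposed vertex to extend to, hence no augmenting path and no contradiction; if instead the rerouting drops an edge so that the $K$-neighbour $u'$ of $u$ becomes exposed too, the rerouted matching is no longer maximum, and an augmenting path for it contradicts nothing. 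Either way the argument does not close. There is also a hidden assumption in your rerouting step: you treat $M'$ as if it restricts to a perfect matching on each factor-component inside $K$, so that the factor-critical quotient lets you re-pair vertices locally. This is true for maximum matchings of $G$ (non-allowed edges, in particular edges between distinct factor-components, are never used), but a maximum matching of $G-u$ may very well use such edges, so the local moves you invoke need not be available for $M'$.

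The factor-critical quotients $G[X]/H$ are indeed the key input, but in the original development they are deployed with a fixed maximum matching $M$ of $G$ to control $M$-alternating paths --- via the standard equivalence of $u\gsim{G}v$ with the nonexistence of an $M$-saturated path between $u$ and $v$ --- rather than to reshuffle a maximum matching of $G-u$.
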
 

Under Theorem~\ref{thm:cor}, 
for $S\in\pargpart{G}{H}$, 
we denote by $\parup{G}{S}$ 
the set of factor-components that are contained in a connected component $K$ of $G[\vparup{G}{H}]$ 
 with $\parNei{G}{K} \cap V(H) \subseteq S$. 
The set $\bigcup \{ V(I): I \in\vparup{G}{H}\}$ is denoted by $\vparup{G}{S}$. 
We denote $\vparup{G}{H} \setminus S \setminus \vparup{G}{S}$ by $\parvcoup{G}{S}$. 

Theorem~\ref{thm:cor} integrates the two structures given by Theorems~\ref{thm:order} 
and \ref{thm:gkl} into a structure of graphs 
that is reminiscent of an architectural building. 
We call this integrated structure the {\em basilica decomposition} of a graph.

\subsection{Remark on Inconsistent Factor-Components} 

Inconsistent factor-components in a graph have a trivial structure regarding the basilica decomposition. 
The next statement is  easily confirmed from  Fact~\ref{fact:inconst2ge} and the Gallai-Edmonds structure theorem.

\begin{fact}\label{fact:inconst2basilica}
Let $G$ be a graph. 
Any inconsistent component is minimal in the poset $(\ecomp{G}, \yield)$. 
For any $H\in\inconst{G}$, 
if $V(H)\cap A(G)\neq \emptyset$, then $\pargpart{G}{H} = \{ V(H)\cap A(G)\}$; 
otherwise, $\pargpart{G}{H} = \emptyset$. 
\end{fact}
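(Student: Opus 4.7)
My plan is to handle the two assertions separately, reducing each to the Gallai-Edmonds structure theorem by way of Fact~\ref{fact:inconst2ge}. The common starting observation is that for an inconsistent factor-component $H$, Fact~\ref{fact:inconst2ge} gives $V(H) \subseteq D(G) \cup A(G)$, and hence $V(H) \setminus D(G) = V(H) \cap A(G)$.

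For the statement about $\pargpart{G}{H}$, I recall that $\pargpart{G}{H}$ was defined to partition $V(H) \setminus D(G)$, which by the above equals $V(H) \cap A(G)$; this immediately settles the case $V(H) \cap A(G) = \emptyset$. Otherwise, I would show that any two vertices $u, v \in V(H) \cap A(G)$ satisfy $u \gsim{G} v$, so all of $V(H) \cap A(G)$ collapses to a single equivalence class. Factor-connectedness is automatic since both $u$ and $v$ lie in $V(H)$. For the deficiency condition, I would take $A(G) \setminus \{u, v\}$ as a candidate barrier in $G - u - v$: the removal of $A(G) \setminus \{u, v\}$ from $G - u - v$ gives exactly $G - A(G)$, so the Berge formula yields
\[
\defi{G - u - v} \;\geq\; q_G(A(G)) - (|A(G)| - 2) \;=\; \defi{G} + 2,
\]
as required.

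For the minimality assertion, I would argue by contradiction: assume $G_1 \in \ecomp{G}$ with $G_1 \neq H$ satisfies $G_1 \yield H$, and let $X$ be a witnessing separating set, so that $G[X]/G_1$ is factor-critical. Factor-criticality implies, by exposing the contracted vertex in a near-perfect matching, that $G[X \setminus V(G_1)]$ admits a perfect matching, whence $\defi{G[X \setminus V(G_1)]} = 0$. On the other hand, $X \setminus V(G_1)$ is still a separating set and contains $V(H)$, so appealing to the standard fact that a maximum matching of $G$ restricts to a maximum matching on any union of factor-components, we obtain the additive decomposition
\[
\defi{G[X \setminus V(G_1)]} \;=\; \sum_{\substack{C \in \ecomp{G} \\ V(C) \subseteq X \setminus V(G_1)}} \defi{C} \;\geq\; \defi{H}.
\]
Since $H$ is inconsistent, any $v \in V(H) \cap D(G)$ is exposed by some maximum matching of $G$ whose restriction to $H$ is itself a maximum matching, giving $\defi{H} \geq 1$ and the desired contradiction.

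The only nontrivial ingredient I would need to invoke is the additivity of deficiency over the factor-component decomposition within a separating set, which is a standard fact resting on the observation that allowed edges never cross factor-component boundaries; everything else is direct manipulation with the Berge formula and the Gallai-Edmonds partition. I expect this additivity to be the only step worth flagging, since the paper seems to treat it as preliminary background in Section~\ref{sec:basilica}.
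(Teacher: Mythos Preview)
Your proposal is correct. The paper does not give a proof of this fact at all; it simply asserts that the statement ``is easily confirmed from Fact~\ref{fact:inconst2ge} and the Gallai-Edmonds structure theorem,'' and your argument is a faithful and complete expansion of exactly that hint, using the barrier $A(G)\setminus\{u,v\}$ for the $\gsim{G}$ part and the factor-criticality of $G[X]/G_1$ together with deficiency additivity over separating sets for the minimality part.
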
 

For simplicity, 
even for $H\in\inconst{G}$ with  $V(H)\cap A(G)  = \emptyset$, 
we treat as if $V(H)\cap A(G)$ is a member of $\gpart{G}$. 
That is, we  let $\pargpart{G}{H} = \{V(H)\cap A(G)\}$ 
and 
$\parvcoup{G}{V(H)\cap A(G)} = \parvcoup{G}{\emptyset} = V(H)\cap D(G) = V(H)$.

Under Fact~\ref{fact:inconst2basilica}, the substantial information provided by the basilica decomposition lies in the consistent factor-components.

\subsection{Additional Properties} 

In this section, we present some properties of the basilica decomposition 
that are used in later sections. 

\begin{lemma}[Kita~\cite{DBLP:conf/cocoa/Kita13}] \label{lem:basilica2path} 
Let $G$ be a graph, and let $M$ be a maximum matching of $G$. 
Let $H\in\const{G}$,   $S\in\pargpart{G}{H}$, and $s\in S$. 
\begin{rmenum} 
\item 
 For any $t\in S$, 
 there is an $M$-forwarding path from $s$ to $t$, 
 whose vertices are contained in $S\cup \parvcoup{G}{S}$; 
 however, there is no $M$-saturated path between $s$ and $t$. 
\item 
For any $t\in \parvcoup{G}{S}$, 
there exists an $M$-saturated path between $s$ and $t$ 
whose vertices are contained in $S\cup \parvcoup{G}{S}$.  
\item 
For any $t\in \vparup{G}{S}$, 
there is an $M$-forwarding path from $t$ to $s$,  
whereas there is no $M$-forwarding path from $s$ to $t$ 
or $M$-saturated path between $s$ and $t$. 
\end{rmenum} 
\end{lemma}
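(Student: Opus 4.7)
The plan is to prove the three parts via a joint induction on the partial order $(\ecomp{G}, \yield)$, combined with the standard symmetric-difference trick that turns an $M$-alternating path into a matching modification. Parts (i) and (iii) are tightly linked, since the nonexistence statements in both rest on the same device.

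For the nonexistence claims, I would use that an $M$-saturated path $P$ between vertices $u$ and $v$ produces $M \triangle E(P)$, a matching of $G - u - v$ of size $|M| - 1$. In (i), $s \gsim{G} t$ unwinds to $\defi{G - s - t} > \defi{G}$, equivalently $\nu(G - s - t) \le \nu(G) - 2$, immediately contradicting any such path. In (iii), a saturated path between $s$ and $t \in \vparup{G}{S}$, or a forwarding path from $s$ to $t$, would, upon symmetric-difference modification, collapse the strict placement of the factor-component $I$ containing $t$ above $H$ under $\yield$, contradicting Theorem~\ref{thm:order}.

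For the existence statements, I would exploit the factor-critical structure embedded in the definition of $\yield$: for each $I \in \parup{G}{H}$, there is a separating set whose contraction collapsing $H$ is factor-critical. The forwarding path in (i) emerges from the classical ear-decomposition of factor-critical graphs applied to this contraction, where vertices in the same Kotzig-Lov\'asz class are joined by an alternating path of the right parity, pulled back to $G$ via vertices in $\parvcoup{G}{S}$. The saturated path in (ii) is built analogously, routed through an ear visiting the component or class containing $t$, with Theorem~\ref{thm:cor} guaranteeing that the ear's feet in $H$ cooperate with $S$. For (iii), an inductive forwarding path inside the component of $t$, terminating at a vertex in $\parNei{G}{K} \cap V(H) \subseteq S$, is spliced with a forwarding path inside $H$ provided by (i), and one checks that the parities at the gluing vertex cooperate.

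The principal obstacle is the precise combinatorial bookkeeping: every constructed path must remain inside the prescribed region ($S \cup \parvcoup{G}{S}$ in (i)--(ii)) and every concatenation must preserve alternation at the gluing vertices. Theorem~\ref{thm:cor} does the heavy lifting here by pinning down the unique class $S \in \pargpart{G}{H}$ attached to each upper-bound component; without this correspondence there would be no structural reason for a forwarding path from above to terminate at the intended $s$, nor any way to control the alternation pattern as the path enters or leaves $H$.
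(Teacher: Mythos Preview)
This lemma is not proved in the paper at all: it is quoted verbatim as a result of Kita~\cite{DBLP:conf/cocoa/Kita13}, with no argument supplied. There is therefore no proof in the present paper against which to compare your proposal.

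On the content of your sketch itself: the nonexistence clause in (i) is correct and standard---an $M$-saturated path $P$ between $s$ and $t$ yields the matching $M \triangle E(P)$ of $G-s-t$ of size $\nu(G)-1$, contradicting $\nu(G-s-t)\le\nu(G)-2$. The remaining parts are much thinner. For (iii), asserting that an $M$-forwarding or $M$-saturated path would ``collapse the strict placement of $I$ above $H$ under $\yield$'' is a restatement of what is to be shown, not an argument; you would need to explain concretely how the symmetric-difference matching interacts with a separating set witnessing $H \yield I$, and why this forces $I \yield H$ or some other contradiction with Theorem~\ref{thm:order}. The existence constructions in (i)--(ii) are likewise only gestures: pulling back an ear decomposition of a factor-critical contraction to $G$ while keeping the path confined to $S\cup\parvcoup{G}{S}$ and preserving alternation at every transition is precisely the delicate part, and your outline does not indicate how it is actually carried out. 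The full argument in the cited source requires a considerably more detailed analysis of alternating-path reachability inside $\vparupstar{G}{H}$ than what you have written here.
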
  

The first part of the next lemma is provided in Kita~\cite{kita2015graph}, 
and the second part can be easily proved from Lemma~\ref{lem:basilica2path}.

\begin{lemma} \label{lem:noear} 
Let $G$ be a graph, and let $M$ be a maximum matching of $G$. 
Let $S\in\gpart{G}$. 
If there is an $M$-ear relative to $S\cup \parvcoup{G}{S}$ that has internal vertices, 
then the ends of this ear are contained in $S$. 
\end{lemma}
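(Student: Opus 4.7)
The plan is a proof by contradiction: assume the $M$-ear $P$, with internal vertices $v_1,\ldots,v_k$, has an endpoint $s_1\in\parvcoup{G}{S}$, and let $s_2$ be the other endpoint. I first resolve the case $s_2\in S$, which contains the main idea. Applying Lemma~\ref{lem:basilica2path}(ii) with $s:=s_2$ and $t:=s_1$ yields an $M$-saturated path $Q$ from $s_2$ to $s_1$ with $V(Q)\subseteq X:=S\cup\parvcoup{G}{S}$. Since the internal vertices of $P$ lie outside $X$, we have $V(P)\cap V(Q)=\{s_1,s_2\}$, so $C:=Q\cup P$ is a simple circuit; it is $M$-alternating because at each of $s_1,s_2$ the edge from $Q$ lies in $M$ (by $M$-saturation) while the edge from $P$ does not (by the $M$-ear condition). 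Exchanging along $C$ produces a maximum matching $M':=M\triangle E(C)$ containing the edge $s_2 v_k$; hence $s_2 v_k$ is allowed, forcing $s_2$ and $v_k$ into the same factor-component. But $s_2\in V(H)$ (since $S\in\pargpart{G}{H}$) while $v_k\notin X\supseteq V(H)$, a contradiction.

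For the remaining case $s_2\in\parvcoup{G}{S}$, I reduce to the previous case by replacing $S$ with another Kotzig-Lov\'asz class. Assuming $s_1\in V(H)$ (the sub-case $s_1\in\vparup{G}{H}$ is analogous), let $S'\in\pargpart{G}{H}$ denote the class of $s_1$; then $S'\ne S$. Using Theorem~\ref{thm:cor} together with the observation that two $M$-matched vertices of one factor-component lie in distinct Kotzig-Lov\'asz classes, I argue that the internal vertices of $P$ all lie outside $X':=S'\cup\parvcoup{G}{S'}$: the vertex $v_1$ cannot lie in $\vparup{G}{H}$, for otherwise its component would be simultaneously linked to $S$ in $V(H)$ (forced by $v_1\notin X$) and to $S'$ in $V(H)$ (forced by the edge $s_1 v_1$ and Theorem~\ref{thm:cor}), contradicting the disjointness $\vparup{G}{S}\cap\vparup{G}{S'}=\emptyset$; the conclusion then propagates along $P$ via matched-pair considerations on each allowed edge $v_{2i-1}v_{2i}\in M$. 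Hence $P$ is an $M$-ear relative to $X'$, and provided $s_2\notin S'$ the first-case argument with $S'$ in place of $S$ yields the contradiction.

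The main obstacle is the residual sub-case in which $s_1$ and $s_2$ belong to a common class $S'\ne S$, where neither reduction produces a mismatch of endpoints. My plan here is to exploit the defining defect inequality $\defi{G-s_1-s_2}>\defi{G}$ for $s_1\gsim{G}s_2$, combined with the $M$-exposed structure of $P$ between $s_1$ and $s_2$: an augmenting-path adjustment of $M\triangle E(P)$ inside $G-s_1-s_2$ should produce a matching there of size $\nu(G)-1$, contradicting the defect inequality. The delicate step will be organising the augmenting adjustment so that it genuinely lives in $G-s_1-s_2$ and does not re-introduce $s_1$ or $s_2$.
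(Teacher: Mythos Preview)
The paper does not actually give a proof of this lemma; it attributes the statement to \cite{kita2015graph} and remarks only that it ``can be easily proved from Lemma~\ref{lem:basilica2path}.'' So there is no detailed argument in the paper to compare against, and I evaluate your attempt on its own terms. Your Case~1 ($s_1\in\parvcoup{G}{S}$, $s_2\in S$) is correct: the ear $P$ together with the $M$-saturated path $Q$ from Lemma~\ref{lem:basilica2path}(ii) forms an $M$-alternating circuit, the swap gives a maximum matching containing $s_2v_k$, and since $s_2\in V(H)$ while $v_k\notin X\supseteq V(H)$ this edge joins distinct factor-components, a contradiction.

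Case~2 has genuine gaps. First, the propagation breaks already at $v_3$: from $v_1,v_2\notin\vparupstar{G}{H}$ you cannot infer anything about $v_3$, because $v_2v_3\notin M$ and Theorem~\ref{thm:cor} constrains only those neighbours of a component of $G[\vparup{G}{H}]$ that lie in $V(H)$, which $v_2$ does not. Second, you never verify $s_2\in X'$, and this can fail: if $s_2\in\vparup{G}{H}\setminus\vparup{G}{S}$ and the component of $s_2$ in $G[\vparup{G}{H}]$ happens to be attached to $S'$, then $s_2\in\vparup{G}{S'}$ and $s_2\notin X'$, so $P$ is not an ear relative to $X'$ at all. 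Third, the residual sub-case is only a plan, and a shaky one: $M\triangle E(P)$ is not a matching (each $s_i$ is $M$-covered by an edge outside $P$), you exhibit no augmenting path in $G-s_1-s_2$, and the circuit case $s_1=s_2$ sits inside this sub-case too, where the defect inequality you want to invoke is vacuous (the relation $s_1\gsim{G}s_1$ holds by the ``identical'' clause, not by any defect condition). The reduction strategy for Case~2 needs substantial reworking.
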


\section{Poset with Transitive Forbidden Relation} \label{sec:ptfp}

We now introduce the new concept of {\em posets with a transitive forbidden relation}, 
which serves as a language to describe 
the nonbipartite DM decomposition. 

\begin{definition} 
Let $X$ be a set, and let $\preceq$ be a partial order over $X$. 
Let $\smile$ be a binary relation over $X$ 
such that, 
\begin{rmenum} 
\item 
for each $x,y,z \in X$, if    $x \preceq y$ and $y\smile z$ hold, then  $x \smile z$ holds (transitivity); 
\item for each $x \in X$, $x\smile x$ does not hold (nonreflexivity); and, 
\item for each $x, y\in X$, if $x\smile y$ holds, then $y\smile x$ also holds (symmetry). 
\end{rmenum} 
We call this poset endowed with this additional binary relation 
a {\em poset with a transitive forbidden relation} or {\em TFR poset}  in short, 
and denote this by $(X, \preceq, \smile)$. 
We call a pair of two elements $x$ and $y$ with $x \smile y$ {\em forbidden}. 
\end{definition}

Let $(X, \preceq, \smile)$ be a TFR poset. 
For two elements $x, y\in X$ with $x \smile y$, 
we say that $x \overset{\mathrm{\star}}{\smile} y$ 
if, there is no $z\in X\setminus \{x, y\}$ with $x\preceq z$ and $z\smile y$. 
We call such a forbidden pair of $x$ and $y$ {\em immediate}. 
A TFR poset can be visualized in a similar way to an ordinary posets. 
We represent $\preceq$ just in the same way as the Hasse diagrams 
and depict $\smile$ by indicating every immediate forbidden pairs.

\begin{definition} 
Let $P$ be a TFR poset $(X, \preceq, \smile)$. 
A lower or upper ideal $Y$ of $P$ is {\em legitimate} 
if no elements $x, y\in Y$ satisfy $x\smile y$. 
Otherwise, we say that $Y$ is {\em illegitimate}. 
Let $Y$ be a consistent lower or upper ideal, 
and let $Z$ be the subset of $X\setminus Y$ such that, 
for each $x\in Z$, there exists $y\in Y$ with $x\smile y$. 
We say that $Y$ is {\em spanning} if $Y \cup Z = X$. 
\end{definition}

\section{Dulmage-Mendelsohn Decomposition for General Graphs} \label{sec:gdm}

We now provide our new results of the DM decomposition for general graphs. 
In this section, unless otherwise stated, let $G$ be a graph. 

\begin{definition} 
A {\em  Dulmage-Mendelsohn component}, or a {\em  DM component } in short, 
 is a subgraph of the form $G[S\cup \parvcoup{G}{S}]$, where $S \in \gpart{G}$, 
endowed with $S$ as an attribute known as the {\em base}.  
For a  DM component $C$, the base of $C$ is denoted by $\sd{C}$. 
Conversely, for $S\in \gpart{G}$, $\ds{S}$ denotes 
the  DM components whose base is $S$.  
We denote by $\dmcomp{G}$ the set of  DM components of $G$. 
\end{definition}

Hence, distinct  DM components can be equivalent as a subgraph of $G$. 
A base $S\in \gpart{G}$ uniquely determines a  DM component.

\begin{definition} 
A  DM component $C$ is said to be {\em inconsistent} 
if $\sd{C} \in\pargpart{G}{H}$ for some $H\in\inconst{G}$; 
otherwise, $C$ is said to be {\em consistent}. 
The sets of consistent and inconsistent   DM components 
are denoted by $\cdmcomp{G}$ and $\idmcomp{G}$, respectively. 
\end{definition}

Under Fact~\ref{fact:inconst2basilica}, 
any $H\in\idmcomp{G}$ is equal to an inconsistent factor-component 
as a subgraph of $G$,  
and $\sd{H} = V(H)\cap A(G)$ and $V(H)\setminus \sd{H} = V(H)\cap D(G)$.

\begin{definition} 
We define binary relations $\gdmo$ and $\gdm$ over $\dmcomp{G}$ as follows: 
for $D_1, D_2\in\dmcomp{G}$, 
we let $D_1 \gdmo D_2$ if $D_1 = D_2$ or if 
$\parNei{G}{\parvcoup{G}{S_1}}\cap S_2 \neq \emptyset$; 
we let $D_1 \gdm D_2$ if 
there exist $C_1, \ldots, C_k\in\dmcomp{G}$, where $k\ge 1$, 
such that $\sd{C_1} = \sd{D_1}$, $\sd{C_k} = \sd{D_2}$, and $C_i \gdmo C_{i+1}$ for each $i \in \{1,\ldots, k\}\setminus \{k\}$. 
\end{definition}

\begin{definition} 
We define binary relations $\nono$ and $\non$ over $\dmcomp{G}$ as follows: 
for $D_1, D_2\in\dmcomp{G}$, 
we let $D_1 \nono D_2$ if $\sd{D_2}\subseteq V(D_1)\setminus \sd{D_1}$ holds; 
we let $D_1 \non D_2$ 
if there exists $D' \in \dmcomp{G}$ 
with $D_1 \gdm D'$ and $D' \nono D_2$. 
\end{definition}

In the following, we prove that $(\dmcomp{G}, \gdm, \non)$ is a TFR poset, 
which gives a generalization of the DM decomposition. 
The next lemma is easily observed from Facts~\ref{fact:inconst2ge} and \ref{fact:inconst2basilica}. 

\begin{lemma} \label{lem:inconst2dm} 
If $C$ is an inconsistent  DM component of a graph $G$, 
then there is no $C'\in\dmcomp{G}\setminus \{C\}$ 
with $C \gdm C'$ or $C \non C'$. 
\end{lemma}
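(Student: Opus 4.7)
My plan is to exploit the Gallai-Edmonds description of the inconsistent factor-component $H$ underlying $C$ in order to show that the ``skirt'' $\parvcoup{G}{\sd{C}}$ has every external neighbor already inside $\sd{C}$. Once this containment is in hand, both assertions of the lemma reduce to it together with the observation that a nonempty base of a DM component is always disjoint from $D(G)$.

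First I would invoke Fact~\ref{fact:inconst2basilica} to obtain $H\in\inconst{G}$ with $\sd{C}=V(H)\cap A(G)$ and $\parvcoup{G}{\sd{C}}=V(H)\cap D(G)$, and Fact~\ref{fact:inconst2ge} to identify $V(H)$ with a connected component of $G[D(G)\cup A(G)]$. The containment $\parNei{G}{\parvcoup{G}{\sd{C}}}\subseteq\sd{C}$ then follows from two ingredients of the Gallai-Edmonds structure: every neighbor of a vertex in $\parvcoup{G}{\sd{C}}\subseteq D(G)$ lies in $D(G)\cup A(G)$ by the definition of $A(G)$ as the neighbor set of $D(G)$, and such a neighbor must lie in $V(H)$ since $H$ is a full connected component of $G[D(G)\cup A(G)]$; those outside $\parvcoup{G}{\sd{C}}$ therefore lie in $V(H)\cap A(G)=\sd{C}$. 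With this, the $\gdm$ claim is immediate: if $C\gdmo C'$ with $C'\ne C$ then $\parNei{G}{\parvcoup{G}{\sd{C}}}\cap\sd{C'}\ne\emptyset$ forces $\sd{C'}\cap\sd{C}\ne\emptyset$, and the pairwise disjointness of the equivalence classes in $\gpart{G}$ forces $\sd{C'}=\sd{C}$ and hence $C'=C$, a contradiction; induction along the chain in the definition of $\gdm$ extends the conclusion. For $\non$, unfolding the definition together with the preceding result shows that any witness $D'$ with $C\gdm D'\nono C'$ must satisfy $D'=C$, whence $\sd{C'}\subseteq V(C)\setminus\sd{C}=V(H)\cap D(G)\subseteq D(G)$; since every nonempty base lies outside $D(G)$ (inside $C(G)$ for consistent components, inside $A(G)$ for inconsistent ones by Fact~\ref{fact:inconst2basilica}), this forces $C'=C$.

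The main subtlety I anticipate is the degenerate case of an inconsistent factor-component $H'$ with $V(H')\cap A(G)=\emptyset$, whose base is formally $\emptyset$ under the convention introduced after Fact~\ref{fact:inconst2basilica}: in that case the inclusion $\emptyset\subseteq V(C)\setminus\sd{C}$ would hold vacuously. Disposing of this case cleanly will require invoking that same convention, which identifies each formally empty base with its own distinct factor-component, so that different empty bases are not conflated and do not produce spurious $\nono$ relations out of $C$.
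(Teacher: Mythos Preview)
Your argument is correct and is precisely the unpacking of Facts~\ref{fact:inconst2ge} and~\ref{fact:inconst2basilica} that the paper invokes without further detail; the key containment $\parNei{G}{\parvcoup{G}{\sd{C}}}\subseteq\sd{C}$ together with the observation that nonempty bases avoid $D(G)$ is exactly what makes the lemma ``easily observed.'' Your treatment of the degenerate empty-base case is also the right reading of the paper's convention after Fact~\ref{fact:inconst2basilica}, though the paper itself glosses over this point.
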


We first prove that $\gdm$ is a partial order over $\dmcomp{G}$. 
We provide Lemmas~\ref{lem:disjoint2path} and \ref{lem:acyclic} 
and thus prove Theorem~\ref{thm:gdmorder}.

\begin{lemma} \label{lem:disjoint2path} 
Let $G$ be a graph, let $M$ be a maximum matching of $G$, 
and let $D_1, \ldots, D_k\in\dmcomp{G}$, where $k\ge 1$, 
be DM components with $D_1\gdmo \cdots \gdmo D_k$ 
no two of which share vertices.  
Then, for any $s\in \sd{D_1}$ and for any $t\in \sd{D_k}$, 
there is an $M$-forwarding path from $s$ to $t$ 
whose vertices are contained in $V(D_1)\dot\cup \cdots \dot\cup V(D_k)$. 
If $D_k\in\cdmcomp{G}$ holds and $t$ is a vertex from $V(D_k)\setminus \sd{D_k}$,  
then there is an $M$-saturated path between $s$ and $t$ 
whose vertices are contained in $V(D_1)\dot\cup \cdots \dot\cup V(D_k)$.  
\end{lemma}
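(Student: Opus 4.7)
The plan is to induct on $k$. The base case $k=1$ reduces to parts (i) and (ii) of Lemma~\ref{lem:basilica2path} when $D_1$ is consistent, and to a short direct argument via Fact~\ref{fact:inconst2basilica} and the factor-criticality of the odd components of $G-A(G)$ (Theorem~\ref{thm:ge}) when $D_1$ is inconsistent (only the forwarding half is meaningful in that case, since the saturated half presupposes $D_k \in \cdmcomp{G}$).

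For the inductive step with $k\ge 2$, I would exploit the edge furnished by $D_{k-1}\gdmo D_k$. Disjointness forces $D_{k-1}\ne D_k$, so the definition of $\gdmo$ yields vertices $u\in\parvcoup{G}{\sd{D_{k-1}}}$ and $v\in\sd{D_k}$ with $uv\in E(G)$. Because $D_{k-1}\gdm D_k$ and $D_{k-1}\ne D_k$, Lemma~\ref{lem:inconst2dm} forces $D_{k-1}$ to be consistent, so the saturated part of the inductive hypothesis, applied to the truncated chain $D_1\gdmo\cdots\gdmo D_{k-1}$ with $u \in V(D_{k-1})\setminus\sd{D_{k-1}}$ as the far endpoint, supplies an $M$-saturated path $R$ between $s$ and $u$ whose vertices lie in $V(D_1)\cup\cdots\cup V(D_{k-1})$. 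From the other side, Lemma~\ref{lem:basilica2path}(i) supplies an $M$-forwarding path $Q$ from $v$ to $t$ inside $V(D_k)$; for the second half of the statement, where $D_k$ is consistent and $t\in V(D_k)\setminus\sd{D_k}$, Lemma~\ref{lem:basilica2path}(ii) instead supplies an $M$-saturated path $Q$. Pairwise disjointness of the $V(D_i)$ makes the concatenation $R\cdot uv\cdot Q$ a simple path from $s$ to $t$.

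The main obstacle is to verify that this concatenation is $M$-alternating in the required sense, which boils down to showing $uv\notin M$. I would argue this by contradiction: if $uv\in M$, then $uv$ is allowed, placing $u$ and $v$ in a common factor-component $H^\star$. The case $H^\star = H_{k-1}$ (the factor-component containing $\sd{D_{k-1}}$) is excluded because $u\in\parvcoup{G}{\sd{D_{k-1}}}\subseteq\vparup{G}{H_{k-1}}$ is disjoint from $V(H_{k-1})$. Hence $H^\star = H_k\ne H_{k-1}$, and $H_k$ is therefore a strict upper bound of $H_{k-1}$; Theorem~\ref{thm:cor} then places $H_k$ in $\parup{G}{S'}$ for some $S'\in\pargpart{G}{H_{k-1}}$. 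The subcase $S'=\sd{D_{k-1}}$ forces $u\in V(H_k)\subseteq\vparup{G}{\sd{D_{k-1}}}$, contradicting $u\in\parvcoup{G}{\sd{D_{k-1}}}$; the subcase $S'\ne\sd{D_{k-1}}$ forces $\sd{D_k}\subseteq V(H_k)\subseteq\vparup{G}{S'}\subseteq\parvcoup{G}{\sd{D_{k-1}}}$, which is incompatible with $\parNei{G}{\parvcoup{G}{\sd{D_{k-1}}}}\cap\sd{D_k}\ne\emptyset$ from the defining condition of $D_{k-1}\gdmo D_k$. Once $uv\notin M$ is in hand, a parity check at $u$ and $v$ is immediate: $R$ covers $u$ by an $M$-edge internal to $R$, $Q$ covers $v$ by an $M$-edge internal to $Q$, and the non-$M$ edge $uv$ bridges them, giving the required $M$-forwarding (respectively, $M$-saturated) path from $s$ to $t$.
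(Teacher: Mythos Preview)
Your approach coincides with the paper's: both concatenate, across the chain, the within-$D_i$ alternating paths supplied by Lemma~\ref{lem:basilica2path} and the bridge edges coming from the definition of $\gdmo$. You frame it as induction on $k$ while the paper writes the concatenation $P_1+\cdots+P_k+\{t_is_{i+1}\}$ directly, but this is only cosmetic; your explicit treatment of the inconsistent base case is in fact more careful than the paper's.

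Your detour to establish $uv\notin M$, however, is both unnecessary and contains an error. The inclusion $\parvcoup{G}{\sd{D_{k-1}}}\subseteq\vparup{G}{H_{k-1}}$ that you invoke to rule out $H^\star=H_{k-1}$ is false: for $S\in\pargpart{G}{H}$ the set $\parvcoup{G}{S}$ always contains $V(H)\setminus S$ (the displayed definition in the paper has a typo, $\vparupstar{G}{H}$ in place of $\vparup{G}{H}$; this is forced by the inconsistent case and by the requirement that a DM component $G[S\cup\parvcoup{G}{S}]$ contain all of $V(H)$). So $u$ may perfectly well lie in $V(H_{k-1})$. The case split can be repaired by arguing via $v$ rather than $u$ (disjointness of $V(D_{k-1})$ and $V(D_k)$ forces $v\notin V(H_{k-1})$), but the whole excursion is superfluous: the fact you already cite in your final sentence---that $R$ is $M$-saturated and hence covers $u$ by an $M$-edge of $R\subseteq V(D_1)\cup\cdots\cup V(D_{k-1})$---immediately shows that $u$'s $M$-partner is not $v\in V(D_k)$, i.e.\ $uv\notin M$. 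The paper takes this as self-evident and does not comment.
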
 

\begin{proof} 
For each $i \in \{1,\ldots, k\}\setminus \{k\}$, 
let $t_i\in \parvcoup{G}{\sd{D_i}}$ and $s_{i+1}\in \sd{D_{i+1}}$ 
be vertices with $t_is_{i+1}\in E(G)$.  
Let $s_1 := s$ and $t_k := t$. 
According to Lemma~\ref{lem:basilica2path}, 
for each $i\in \{1,\ldots, k\}\setminus \{k\}$, 
there is an $M$-saturated path $P_i$ between $s_i$ and $t_i$ 
with $V(P_i)\subseteq V(D_i)$; 
additionally, 
there is an $M$-forwarding path $P_k$ from $s_k$ to $t$ with $V(P_k)\subseteq V(D_k)$. 
Thus, $P_1 + \cdots + P_k + \{ t_is_{i+1}: i = 1,\ldots, k-1\}$
 is a desired $M$-forwarding path from $s$ to $t$. 
The claim for $t\in V(D_k)\setminus \sd{D_k}$ 
can be also proved in a similar way using Lemma~\ref{lem:basilica2path}. 
\end{proof}

Lemma~\ref{lem:disjoint2path} yields Lemma~\ref{lem:acyclic}: 

\begin{lemma} \label{lem:acyclic} 
Let $G$ be a graph, let $M$ be a maximum matching of $G$, 
and let $D_1, \ldots, D_k$, where $k\ge 2$, 
be   DM components with $D_1\gdmo \cdots \gdmo D_k$ 
such that $\sd{D_i} \neq \sd{D_{i+1}}$ for any $i\in\{1,\ldots, k-1\}$. 
Then, for any $i,j\in \{1,\ldots, k\}$ with $i\neq j$, 
$V(D_i) \cap V(D_j) = \emptyset$.  
\end{lemma}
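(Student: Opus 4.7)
The plan is to proceed by induction on $k$. As a preliminary reduction, I would observe that by Lemma~\ref{lem:inconst2dm} no inconsistent DM component admits a nontrivial $\gdmo$ relation to a distinct component, so every $D_i$ in the chain must be consistent and Lemma~\ref{lem:basilica2path} applies to each. The chain hypothesis together with the distinct-consecutive-base condition is hereditary under taking contiguous subchains, so the induction goes through smoothly.

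For the inductive step, the subchains $D_1, \ldots, D_{k-1}$ and $D_2, \ldots, D_k$ each satisfy the hypotheses and have length $k-1$. By the inductive hypothesis, both subchains consist of pairwise disjoint DM components, so the only possible obstruction to the full pairwise-disjointness statement is $V(D_1)\cap V(D_k)\ne\emptyset$. The base case $k=2$ is simply this same question with the inductive premise vacuous.

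Suppose for contradiction that $v\in V(D_1)\cap V(D_k)$. I split cases on whether $v$ lies in $\sd{D_1}$ or in $\parvcoup{G}{\sd{D_1}}$, and analogously for $D_k$. The subcase $v\in\sd{D_1}\cap\sd{D_k}$ is settled by the observation that distinct Kotzig--Lov\'asz classes are disjoint. In each remaining subcase, I invoke Lemma~\ref{lem:disjoint2path} on the disjoint subchain $D_1,\ldots,D_{k-1}$ (or $D_2,\ldots,D_k$) to produce an $M$-forwarding path between the bases, and splice it with the $M$-forwarding or $M$-saturated path guaranteed by Lemma~\ref{lem:basilica2path} inside $V(D_1)$ and $V(D_k)$ through $v$. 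The concatenated path then violates a non-existence clause of Lemma~\ref{lem:basilica2path}: one obtains either an $M$-saturated path between two vertices of a single Kotzig--Lov\'asz class, contradicting part~(i), or an $M$-forwarding/$M$-saturated path from a base $S$ into $\vparup{G}{S}$, contradicting part~(iii).

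The main obstacle I expect is not the alternating-path algebra itself but the bookkeeping needed to guarantee that the spliced objects are genuine alternating paths, and not walks that re-enter earlier segments or share the unique $M$-edge at $v$. The disjointness of the two subchains supplied by the inductive hypothesis, combined with the hypothesis that $v$ is the single specified common vertex, is precisely what allows the two halves to meet cleanly at $v$ and nowhere else; the case analysis must exploit this in each configuration of $v$ to select the right flavour of alternating path so that the edge at $v$ lying in $M$ (or not) is consistent on both sides of the splice.
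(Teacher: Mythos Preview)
Your inductive reduction to the single pair $(D_1, D_k)$ is sound, but the splicing step rests on a false premise. You claim that ``$v$ is the single specified common vertex'' and invoke this to conclude that the two halves ``meet cleanly at $v$ and nowhere else.'' Nothing in the hypotheses forces $|V(D_1)\cap V(D_k)|=1$; the intersection may be large. In particular your first subcase is not settled: if $v\in\sd{D_1}\cap\sd{D_k}$ then disjointness of distinct Kotzig--Lov\'asz classes only yields $\sd{D_1}=\sd{D_k}$, hence $D_1=D_k$, which the hypotheses do not exclude for $k\ge 3$ and which makes $V(D_1)\cap V(D_k)=V(D_1)$. In the remaining subcases, a path produced by Lemma~\ref{lem:disjoint2path} inside $V(D_1)\cup\cdots\cup V(D_{k-1})$ may still re-enter $V(D_k)$ through $V(D_1)\cap V(D_k)$ at points other than $v$, so the concatenation with a path inside $V(D_k)$ need not be simple. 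Your final paragraph anticipates exactly this difficulty, but the resolution you propose does not hold.

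The paper's argument sidesteps this by a different mechanism: instead of splicing at a designated common vertex, it traces the constructed path and halts at the \emph{first} vertex it meets in the target component $D_p$, thereby obtaining an $M$-ear relative to $V(D_p)=\sd{D_p}\cup\parvcoup{G}{\sd{D_p}}$. The contradiction then comes from Lemma~\ref{lem:noear}, which forces both ends of such an ear into $\sd{D_p}$, or in one subcase from an $M$-alternating circuit through a non-allowed edge; the non-existence clauses of Lemma~\ref{lem:basilica2path} are not what close the argument. Your scheme could plausibly be repaired along similar lines, but as written it lacks both the first-encounter truncation and the appeal to Lemma~\ref{lem:noear} that make the proof go through.
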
 

\begin{proof} 
Let $q$ be the minimum number from $\{1,\ldots, k-1\}$ 
such that $D_{q+1}$ shares vertice with some $D_i$, where $i\in \{1,\ldots, q-1\}$. 
Let $p$ be the maximum number from $\{1, \ldots, q-1\}$ 
such that $V(D_{q+1})\cap V(D_p)\neq \emptyset$. 
Then, $D_p, \ldots, D_q$ are mutually disjoint. 
Additionally, from Lemma~\ref{lem:inconst2dm}, 
we have $D_p, \ldots, D_q \in \cdmcomp{G}$. 
Either $\sd{D_{q+1}} \subseteq V(D_p)$ or $\parvcoup{G}{\sd{D_{q+1}}} \cap V(D_p) \neq \emptyset$ holds. 
In the first case, 
let $t_{q+1}$ be an arbitrary vertex from $\parvcoup{G}{\sd{D_{q+1}}}$, 
and, in the second case, 
let $t_{q+1}$ be a vertex from $\parvcoup{G}{\sd{D_{q+1}}} \cap V(D_p)$. 
Let $t_q \in \parvcoup{G}{\sd{D_q}}$ and $s_{q+1} \in \sd{D_{q+1}}$ 
be vertices with $t_qs_{q+1} \in E(G)$. 
From Lemma~\ref{lem:basilica2path}, 
there is an $M$-saturated path $P$ between $s_{q+1}$ and $t_{q+1}$ with $V(P)\subseteq V(D_{q+1})$. 

Let $t_{p} \in \parvcoup{G}{\sd{D_p}}$ and $s_{p+1}$ 
be vertices with $t_ps_{p+1} \in E(G)$.  
From Lemma~\ref{lem:disjoint2path}, 
there is an $M$-saturated path $Q$ between $s_{p+1}$ and $t_q$ 
with $V(Q) \subseteq V(D_p)\cup \cdots \cup V(D_q)$. 

Trace $P$ from $s_{q+1}$, 
and let $x$ be the first encountered vertex in $D_p$, 
for which $x = s_{q+1}$ is allowed. 
Then, by letting $R:= t_ps_{p+1} + Q + t_qs_{q+1} + s_{q+1}Px$, $R$ is an $M$-ear relative to $D_p$ 
whose ends are $x$ and $t_p$. 
Note that $R$ contains internal vertices, e.g., $s_{p+1}$.

If $x\in \sd{D_p}$ holds, then, under Lemma~\ref{lem:basilica2path}, 
let $L$ be an $M$-saturated path between $x$ and $t_p$ 
with $V(L)\subseteq V(D_p)$. 
Then, $R + L$ is an $M$-alternating circuit containing non-allowed edges of $G$. 
This is a contradiction. 
If $x\in \parvcoup{G}{\sd{D_p}}$ holds, 
then this contradicts Lemma~\ref{lem:noear}. 
This completes the proof. 
\end{proof}

Combining Lemmas~\ref{lem:disjoint2path} and \ref{lem:acyclic}, 
the next lemma can be stated, which we do not use for proving Theorem~\ref{thm:gdmorder}.  

\begin{lemma} \label{lem:order2path} 
Let $G$ be a graph. 
Let $C_1, C_2 \in \dmcomp{G}$ with $C_1 \gdm C_2$, and let $D_1, \ldots, D_k\in\dmcomp{G}$, where $k\ge 1$, 
be   DM components with $C_1 = D_1$, $C_2 = D_k$, and $D_1\gdmo \cdots \gdmo D_k$.  
Then, for any $s\in \sd{D_1}$ and for any $t\in \sd{D_k}$ (resp. $t\in V(D_k)\setminus \sd{D_k}$), 
there is an $M$-forwarding path from $s$ to $t$ 
(resp. $M$-saturated path between $s$ and $t$) 
whose vertices are contained in $V(D_1)\dot\cup \cdots \dot\cup V(D_k)$.  
\end{lemma}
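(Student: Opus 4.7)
The plan is to reduce the claim to Lemma~\ref{lem:disjoint2path} by first collapsing consecutive duplicates from the chain and then invoking Lemma~\ref{lem:acyclic} on the resulting subchain to secure vertex-disjointness.

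Since a DM component is uniquely determined by its base, whenever $\sd{D_i} = \sd{D_{i+1}}$ we actually have $D_i = D_{i+1}$, and such a repetition can be removed without altering the endpoints or disturbing the $\gdmo$-chain property: the collapsed term is related by $\gdmo$ to whichever term follows, because in the original chain the predecessor just before a kept term equals the most recently kept term itself, and $\gdmo$ already held at that step. Iterating this collapse, I obtain a subchain $D_{i_1} \gdmo \cdots \gdmo D_{i_m}$ with $i_1 = 1$, $i_m = k$, and $\sd{D_{i_j}} \neq \sd{D_{i_{j+1}}}$ for every $j$.

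Next, I apply Lemma~\ref{lem:acyclic} to this subchain to conclude that $D_{i_1}, \ldots, D_{i_m}$ are pairwise vertex-disjoint. Finally, Lemma~\ref{lem:disjoint2path}, applied with endpoints $s \in \sd{D_{i_1}} = \sd{D_1}$ and $t \in \sd{D_{i_m}} = \sd{D_k}$ (or $t \in V(D_{i_m}) \setminus \sd{D_{i_m}} = V(D_k) \setminus \sd{D_k}$ in the saturated case), provides the required $M$-forwarding (resp.\ $M$-saturated) path whose vertex set lies in $V(D_{i_1}) \cup \cdots \cup V(D_{i_m}) \subseteq V(D_1) \cup \cdots \cup V(D_k)$.

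The combinatorial substance is entirely carried by Lemmas~\ref{lem:disjoint2path} and \ref{lem:acyclic}, so the main issue is just bookkeeping around the deduplication. The one subtlety to verify is that, in the saturated-path case, $D_k$ may be assumed consistent so that the corresponding clause of Lemma~\ref{lem:disjoint2path} applies; this is the hardest point to state cleanly, and it follows from Lemma~\ref{lem:inconst2dm}, which precludes any nontrivial $\gdm$-chain from touching an inconsistent DM component, together with a direct appeal to Lemma~\ref{lem:basilica2path} in the residual case where the deduplicated chain has collapsed to a single DM component.
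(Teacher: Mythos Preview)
Your approach is exactly the paper's: the paper simply states that the lemma follows ``combining Lemmas~\ref{lem:disjoint2path} and \ref{lem:acyclic}'', and your deduplicate-then-apply-disjointness argument is precisely how that combination is carried out.

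One correction to your final paragraph: Lemma~\ref{lem:inconst2dm} only asserts that an inconsistent DM component has no strict $\gdm$-successor; it does not prevent such a component from appearing as the \emph{terminal} element $D_k$ of a nontrivial chain, so your claim that it ``precludes any nontrivial $\gdm$-chain from touching an inconsistent DM component'' is too strong. Likewise, your fallback to Lemma~\ref{lem:basilica2path} for a single-component chain does not cover the inconsistent case either, since that lemma is stated only for $H\in\const{G}$. That said, the paper itself glosses over this corner case---the saturated clause of Lemma~\ref{lem:disjoint2path} carries the explicit hypothesis $D_k\in\cdmcomp{G}$, and the only later use of Lemma~\ref{lem:order2path} (in Lemma~\ref{lem:non2sym}) is in a setting where the terminal component is automatically consistent---so your argument is at least as complete as the paper's own.
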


We now obtain Theorem~\ref{thm:gdmorder}.

\begin{theorem} \label{thm:gdmorder} 
Let $G$ be a graph. 
Then, $\gdm$ is a partial order over $\dmcomp{G}$. 
\end{theorem}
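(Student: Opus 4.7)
The plan is to verify the three axioms of a partial order in turn: reflexivity, transitivity, and antisymmetry. Reflexivity $D \gdm D$ is immediate by taking $k=1$ in the definition with $C_1 := D$, since the condition $C_i \gdmo C_{i+1}$ then quantifies over an empty index set, and we only need $\sd{C_1} = \sd{D}$. Transitivity is likewise routine: given $D_1 \gdm D_2$ witnessed by $C_1 \gdmo \cdots \gdmo C_m$ and $D_2 \gdm D_3$ witnessed by $C'_1 \gdmo \cdots \gdmo C'_n$, we have $\sd{C_m} = \sd{D_2} = \sd{C'_1}$, hence $C_m = C'_1$ since a base uniquely determines its DM component, and concatenation yields a valid chain from $D_1$ to $D_3$.

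The substantive step is antisymmetry. Assume $D_1 \gdm D_2$ and $D_2 \gdm D_1$; the goal is to deduce $\sd{D_1} = \sd{D_2}$. Suppose for contradiction that $\sd{D_1} \neq \sd{D_2}$. Concatenating the two witness chains as in the transitivity argument produces a chain $C_1 \gdmo \cdots \gdmo C_k$ with $\sd{C_1} = \sd{C_k} = \sd{D_1}$ and some intermediate index $j$ with $\sd{C_j} = \sd{D_2}$. I would then reduce this chain by iteratively deleting any $C_{i+1}$ with $\sd{C_i} = \sd{C_{i+1}}$. This preserves $\gdmo$-adjacency, because $\sd{C_i} = \sd{C_{i+1}}$ forces $C_i = C_{i+1}$, so $C_{i+1} \gdmo C_{i+2}$ directly gives $C_i \gdmo C_{i+2}$; moreover the multiset of distinct bases appearing in the chain is unchanged. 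Since $\sd{D_1} \neq \sd{D_2}$, the base $\sd{D_2}$ still occurs strictly between the endpoints of the reduced chain, so its length is at least two, and it satisfies the hypothesis of Lemma~\ref{lem:acyclic}.

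Applying Lemma~\ref{lem:acyclic} to the reduced chain yields $V(C_i) \cap V(C_{i'}) = \emptyset$ for all distinct indices. But the equality $\sd{C_1} = \sd{C_k}$ forces $C_1 = C_k$ as DM components, and hence $V(C_1) = V(C_k)$, which is nonempty because every DM component $G[S \cup \parvcoup{G}{S}]$ contains vertices (either $S \neq \emptyset$, or $S = \emptyset$ arises only from an inconsistent factor-component $H$ with $\parvcoup{G}{S} = V(H) \neq \emptyset$). This contradiction delivers $D_1 = D_2$ and completes the proof. The main obstacle is the antisymmetry step: reflexivity and transitivity are formal, whereas ruling out nontrivial cycles ultimately rests on the matching-theoretic content of Lemma~\ref{lem:acyclic}, and the reduction that lets us invoke that lemma is the one delicate bookkeeping point.
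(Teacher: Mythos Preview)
Your proof is correct and follows the same approach as the paper: reflexivity and transitivity are formal from the definition, and antisymmetry is obtained by concatenating the two witness chains into a closed chain and invoking Lemma~\ref{lem:acyclic} to derive a contradiction. The paper's own proof simply says antisymmetry is ``obviously implied'' by Lemma~\ref{lem:acyclic}; your write-up supplies exactly the details (removing consecutive repeated bases, checking $k\ge 2$, and noting $V(C_1)\neq\emptyset$) that make this implication precise.
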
 

\begin{proof} 
Reflexivity and transitivity are obvious from the definition. 
Antisymmetry is obviously implied by Lemma~\ref{lem:acyclic}. 
\end{proof}

In the following, we prove the properties required for $\non$ 
to form a TFR poset $(\dmcomp{G}, \gdm, \non)$. 
We provide Lemmas~\ref{lem:straight}, \ref{lem:path2order}, and \ref{lem:non2sym}, 
and thus prove Theorem~\ref{thm:gdm}

\begin{lemma} \label{lem:straight} 
Let $G$ be a graph, and let $M$ be a maximum matching of $G$. 
Let $s, t \in V(G)$, and let $S$ be the member of $\gpart{G}$ with $s\in S$. 
Let $P$ be an $M$-forwarding path $P$ from $s$ to $t$ 
or an $M$-saturated path between $s$ and $t$. 
If $t\in S\cup \parvcoup{G}{S}$ holds, 
then $P - E(G[S\cup \parvcoup{G}{S}])$ is empty; 
otherwise, $P - E(G[S\cup \parvcoup{G}{S}])$ is a path. 
\end{lemma}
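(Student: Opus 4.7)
The plan is to proceed by contradiction. If the conclusion fails, then $P$ must leave $X := S \cup \parvcoup{G}{S}$ and subsequently re-enter it; let $k$ be the largest index for which $v_0, \ldots, v_k \in X$, and let $b > k$ be the smallest index with $v_b \in X$, so that $v_{k+1}, \ldots, v_{b-1}$ all lie outside $X$. The key structural observation is that since $P$ is $M$-forwarding or $M$-saturated starting at $s$, its edges alternate in $M$-membership with the edge $v_jv_{j+1}$ belonging to $M$ iff $j$ is even. Consequently, every prefix $P_i := v_0 v_1 \ldots v_i$ is itself $M$-alternating: it is $M$-forwarding from $s$ to $v_i$ when $i$ is even, and $M$-saturated between $s$ and $v_i$ when $i$ is odd.

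The case analysis hinges on the parities of $k$ and $b$. Suppose first that $k$ is even. Then the bridge edge $v_kv_{k+1}$ belongs to $M$, so $v_{k+1}$ is factor-connected to $v_k$ and lies in the same factor-component of $G$. Since $v_k \in X$, that factor-component is either $H$ (the factor-component whose Kotzig--Lov\'asz block contains $S$) or one of the strict upper bounds of $H$ contributing to $\parvcoup{G}{S}$, and in either case its entire vertex set is contained in $X$. This forces $v_{k+1} \in X$, contradicting the choice of $k$. The symmetric argument applied to the re-entry bridge $v_{b-1}v_b$ rules out the case $b$ odd.

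Therefore we may assume $k$ is odd and $b$ is even. In this configuration both bridge edges $v_kv_{k+1}$ and $v_{b-1}v_b$ lie outside $M$, and a direct parity count shows that the internal segment $v_{k+1} \ldots v_{b-1}$ is $M$-saturated. Consequently, the excursion $v_k v_{k+1} \ldots v_b$ is an $M$-ear relative to $X$ with internal vertices, and Lemma~\ref{lem:noear} forces $v_k \in S$. But with $k$ odd, the prefix $P_k$ is an $M$-saturated path between $s$ and $v_k \in S$, directly contradicting the nonexistence assertion in Lemma~\ref{lem:basilica2path}(i). The delicate step in this plan is the claim that every factor-component meeting $X$ lies entirely inside $X$, which is what lets the ``bridge edge in $M$'' cases close immediately; once that structural closure is verified, the remaining case work reduces mechanically to the two already quoted lemmas.
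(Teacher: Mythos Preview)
Your argument is correct and follows essentially the same route as the paper's proof. Both proofs locate an $M$-ear relative to $X=S\cup\parvcoup{G}{S}$ arising from an excursion of $P$ outside $X$, invoke Lemma~\ref{lem:noear} to force its end into $S$, and then obtain an $M$-saturated path between two vertices of $S$, contradicting $s\gsim{G} v_k$. The paper compresses your parity analysis into the single sentence ``the connected components of $P-E(G[X])$ except for the one that contains $s$ are $M$-ears,'' which silently uses exactly the separating-set closure you spell out (every factor-component meeting $X$ lies entirely in $X$, so a bridge edge in $M$ is impossible). Your explicit treatment of the parities of $k$ and $b$ makes this step transparent.

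One small point: for the final contradiction you cite Lemma~\ref{lem:basilica2path}(i), whose hypothesis requires $H\in\const{G}$. The paper instead appeals directly to $s\gsim{G} v_k$, which covers the inconsistent case as well (an $M$-saturated path between $s$ and $v_k$ would give $\defi{G-s-v_k}\le\defi{G}$). Replacing your citation by this observation removes the restriction.
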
 

\begin{proof} 
Suppose that the claim fails. 
The connected components of $P - E(G[S\cup \parvcoup{G}{S}])$ 
except for the one that contains $s$ 
are $M$-ears relative to $S\cup \parvcoup{G}{S}$ with internal vertices. 
Let $S'$ be the set of the ends of these $M$-ears. 
From Lemma~\ref{lem:noear}, we have $S' \subseteq S$. 
Trace $P$ from $s$, and let $s'$ be the first vertex in $S'$. 
Then, $sPr$ is an $M$-saturated path between $s$ and $s'$, 
which contradicts $s \gsim{G} s'$. 
This proves the claim. 
\end{proof}

Lemma~\ref{lem:straight} derives the next lemma 
with Lemmas~\ref{lem:basilica2path} and \ref{lem:noear}. 

\begin{lemma} \label{lem:path2order}
Let $G$ be a graph, and let $M$ be a maximum matching of $G$.  
Let $s, t\in V(G)$, 
and let $S$ and  $T$ be the members from $\gpart{G}$  with $s\in S$ and $t\in T$, respectively. 
\begin{rmenum} 
\item \label{item:nosat} 
If there is no $M$-saturated path between $s$ and $t$, 
whereas there is an $M$-forwarding path from $s$ to $t$, 
then $\ds{S} \gdm \ds{T}$ holds. 
\item \label{item:sat} 
If there is an $M$-saturated path between $s$ and $t$, 
then $\ds{S} \non \ds{T}$ holds. 
\end{rmenum} 
\end{lemma}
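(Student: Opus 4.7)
My plan is to prove parts~(i) and~(ii) simultaneously by induction on $|E(P)|$, using Lemma~\ref{lem:straight} at each step to peel off the first DM component $\ds{S}$ from $P$. The base case for~(i) is $|E(P)| = 0$, where $s = t$ forces $S = T$ and $\ds{S} \gdm \ds{T}$ holds by reflexivity; (ii) has no zero-length instance.

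For the inductive step, I would first dispose of the case $t \in V(\ds{S}) = S \cup \parvcoup{G}{S}$. In~(i), $t \in \parvcoup{G}{S}$ would produce an $M$-saturated $s$-$t$ path via Lemma~\ref{lem:basilica2path}(ii), contradicting the hypothesis, so $t \in S$ and $T = S$. In~(ii), Lemma~\ref{lem:basilica2path}(i) symmetrically forces $t \in \parvcoup{G}{S}$; then invoking the structural fact that a single $\gsim{G}$-class sits entirely on one side of the base--cobase partition of each DM component it meets (a consequence of the basilica decomposition's coherence) yields $T \subseteq \parvcoup{G}{S}$, so $\ds{S} \nono \ds{T}$ and hence $\ds{S} \non \ds{T}$.

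When $t \notin V(\ds{S})$, Lemma~\ref{lem:straight} supplies a unique last vertex $v \in V(\ds{S})$ on $P$ whose successor $w$ lies outside $V(\ds{S})$, beyond which $P$ stays outside $V(\ds{S})$; let $W \in \gpart{G}$ contain $w$ (the inconsistent subcase reduces via Fact~\ref{fact:inconst2basilica} and Lemma~\ref{lem:inconst2dm}). The crux is to establish that $v \in \parvcoup{G}{S}$ and $vw \notin M$. For $vw \notin M$, I would invoke the structural feature of the basilica decomposition that the edges of any maximum matching stay within DM components (since DM components exhaust the allowed-edge closure of their enclosing factor-components together with the above factor-components attached via their bases), so a boundary-crossing edge like $vw$ cannot lie in $M$. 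For $v \in \parvcoup{G}{S}$: suppose toward a contradiction that $v \in S$; then Lemma~\ref{lem:basilica2path}(i) excludes any $M$-saturated $s$-to-$v$ path, forcing $sPv$ to be $M$-forwarding and thus ending at $v$ with a non-$M$ edge, so $v$'s $M$-partner on $P$ must be $w$, giving $vw \in M$---contradicting what was just established.

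With $v \in \parvcoup{G}{S}$ and $vw \notin M$ in hand, the edge $vw$ directly witnesses $\ds{S} \gdmo \ds{W}$ by the definition of $\gdmo$. Moreover, the same parity analysis forces $sPv$ to be $M$-saturated (else $sPv$ would be $M$-forwarding and $vw \in M$, a contradiction), so the suffix $wPt$ inherits the alternating type of $P$: it is $M$-forwarding from $w$ to $t$ in case~(i) and $M$-saturated between $w$ and $t$ in case~(ii), with strictly fewer edges than $P$. Applying the inductive hypothesis to $wPt$ then yields $\ds{W} \gdm \ds{T}$ in~(i) and $\ds{W} \non \ds{T}$ in~(ii); chaining with $\ds{S} \gdmo \ds{W}$ closes the induction via transitivity of $\gdm$ from Theorem~\ref{thm:gdmorder} in~(i) and the definition of $\non$ in~(ii). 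The main obstacle in this plan is the rigorous verification that $vw \notin M$---the canonicity of the basilica decomposition must be carefully invoked to rule out crossing $M$-edges, since excluding the recalcitrant sub-case $v \in S$ rests entirely on this fact.
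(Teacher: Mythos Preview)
Your inductive scheme mirrors the paper's, and your handling of the boundary edge $vw$ is fine: $V(\ds{S})=S\cup\parvcoup{G}{S}$ is a separating set (a union of vertex sets of factor-components), so any edge leaving it is non-allowed and hence not in $M$. The paper treats this as obvious, and it is not the real obstacle.

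The genuine gap is in part~(i). To apply the inductive hypothesis to the pair $(w,t)$ you need the full hypothesis of~(i), namely that there is \emph{no $M$-saturated path between $w$ and $t$ in $G$}. Knowing that the particular suffix $wPt$ is $M$-forwarding is not enough: some other $M$-saturated $w$--$t$ path $Q$ could exist, in which case the induction would only give $\ds{W}\non\ds{T}$, and you could not conclude $\ds{S}\gdm\ds{T}$. The paper addresses exactly this point with a separate Claim, proved via Lemma~\ref{lem:noear}: if such a $Q$ hit $S\cup\parvcoup{G}{S}$, then tracing $Q$ from $w$ to its first vertex $z$ in $S\cup\parvcoup{G}{S}$ yields an $M$-ear $zQw+wv$ with an internal vertex and an end $v\in\parvcoup{G}{S}$, contradicting Lemma~\ref{lem:noear}; and if $Q$ avoided $S\cup\parvcoup{G}{S}$ entirely, then $sPv+vw+Q$ would be an $M$-saturated path between $s$ and $t$, contradicting the standing hypothesis of~(i). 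Without this step your induction for~(i) does not close.
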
 

\begin{proof} 
Let $P$ be an $M$-forwarding path from $s$ to $t$ 
or an $M$-saturated between $s$ and $t$. 
We proceed by induction on the number of edges in $P$. 
If $V(P)\subseteq S\cup \parvcoup{G}{S}$ holds, then Lemma~\ref{lem:basilica2path} proves the statements. 
Hence, let $V(P)\setminus V(\ds{S}) \neq \emptyset$, 
and assume that the statements hold for every case where $|E(P)|$ is fewer. 
By Lemma~\ref{lem:straight}, 
$P-E(\ds{S})$ is an $M$-exposed path one of whose end is $t$; 
let $x$ be the other end of $P$.  
Let $y \in V(P)$ be the vertex with $xy \in E(P)$. 
The subpath $sPx$ is obviously $M$-saturated between $s$ and $x$, 
for which $x\in V(\ds{S})$ holds. 
Hence, Lemma~\ref{lem:basilica2path} implies $x\in \parvcoup{G}{S}$. 
Let $R$ be the member of $\gpart{G}$ with $y\in R$. 
Then, we have $\ds{S} \gdmo \ds{R}$.  

If $P$ is an $M$-saturated path, 
then $yPt$ is an $M$-saturated path between $y$ and $t$. 
Therefore, the induction hypothesis implies $\ds{R} \non \ds{T}$. 
Thus, $\ds{S} \non \ds{T}$ is obtained, 
and \ref{item:sat} is proved. 

Consider now the case where $P$ is an $M$-forwarding path from $s$ to $t$. 
The subpath $yPt$ is an $M$-forwarding path from $y$ to $t$. 
\begin{pclaim} 
There is no $M$-saturated path between $y$ and $t$ in $G$. 
\end{pclaim} 
\begin{proof} 
Suppose the contrary, 
and let $Q$ be an $M$-saturated path between $y$ and $t$.  
First, suppose that $Q$ shares vertices with $S\cup \parvcoup{G}{S}$. 
Trace $Q$ from $y$, and let $z$ be the first vertex in $S\cup \parvcoup{G}{S}$. 
Then, $zQy + yx$ is an $M$-ear relative to $S\cup \parvcoup{G}{S}$ 
with internal vertices, e.g., $y$.  
This contradicts Lemma~\ref{lem:noear}. 
Hence, $Q$ is disjoint from  $S\cup \parvcoup{G}{S}$.  
This however implies that $sPx + xy + Q$ is an $M$-saturated path between $s$ and $t$, 
which contradicts the assumption. 
\end{proof}  

Therefore, under the induction hypothesis, 
$\ds{R} \gdm \ds{T}$. 
We thus obtain $\ds{S} \gdm \ds{T}$, 
and \ref{item:nosat} is proved. 
\end{proof}

The symmetry of $\non$ now can be proved from Lemmas~\ref{lem:order2path} and \ref{lem:path2order}. 

\begin{lemma} \label{lem:non2sym}
For a graph $G$, 
the binary relation $\non$ is symmetric, that is, 
if $D_1 \non D_2$ holds for $D_1, D_2 \in \dmcomp{G}$, 
then $D_2 \non D_1$ holds. 
\end{lemma}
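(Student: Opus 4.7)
The plan is to exploit the directional symmetry of $M$-saturated paths together with the reciprocal statements in Lemmas~\ref{lem:order2path} and~\ref{lem:path2order}: the former \emph{produces} an $M$-saturated path from a $\gdm$-chain followed by a $\nono$-relation, while the second part of the latter \emph{consumes} an $M$-saturated path to output a $\non$-relation. Starting from $D_1 \non D_2$, I would build such a saturated path, reverse it, and apply the converse lemma to conclude $D_2 \non D_1$.

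Concretely, unfold the hypothesis $D_1 \non D_2$ to obtain some $D' \in \dmcomp{G}$ with $D_1 \gdm D'$ and $D' \nono D_2$; the latter gives $\sd{D_2} \subseteq V(D') \setminus \sd{D'} = \parvcoup{G}{\sd{D'}}$. Fix a maximum matching $M$ of $G$ and choose $s \in \sd{D_1}$ and $t \in \sd{D_2}$, so that $t \in V(D') \setminus \sd{D'}$. Take any chain $D_1 = C_1 \gdmo \cdots \gdmo C_k = D'$ witnessing $D_1 \gdm D'$; Lemma~\ref{lem:order2path} then yields an $M$-saturated path $P$ between $s$ and $t$ contained in $V(C_1) \cup \cdots \cup V(C_k)$. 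Since the predicate ``$M$-saturated between $x$ and $y$'' is symmetric in $x$ and $y$, the very same $P$ is an $M$-saturated path between $t$ and $s$. The $\gpart{G}$-classes of $t$ and $s$ are $\sd{D_2}$ and $\sd{D_1}$ respectively, so Lemma~\ref{lem:path2order} applied with the endpoints swapped delivers $\ds{\sd{D_2}} \non \ds{\sd{D_1}}$, that is, $D_2 \non D_1$.

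The conceptual core is short; the main obstacle I anticipate is purely bookkeeping. First, I need to check the degenerate chain case $k=1$ where $D_1 = D'$: here Lemma~\ref{lem:order2path} collapses to Lemma~\ref{lem:basilica2path} applied to $s \in \sd{D_1}$ and $t \in \parvcoup{G}{\sd{D_1}}$, which still produces the required $M$-saturated path. Second, the inconsistent-component corner cases merit a brief remark: by Lemma~\ref{lem:inconst2dm}, the very hypothesis $D_1 \non D_2$ forces both $D_1$ and $D_2$ to be consistent, so the saturated-path statements apply in the regime in which they were proved. Once these routine checks are made, the argument is a clean translation between alternating paths and the relations $\gdm$ and $\non$, with the whole symmetry coming for free from the undirected definition of $M$-saturated path.
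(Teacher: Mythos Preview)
Your proof is correct and follows essentially the same approach as the paper: obtain an $M$-saturated path between vertices of $\sd{D_1}$ and $\sd{D_2}$ via Lemma~\ref{lem:order2path}, then feed it into Lemma~\ref{lem:path2order}\ref{item:sat} with the endpoints swapped. One small inaccuracy in your bookkeeping remark: Lemma~\ref{lem:inconst2dm} only directly forces $D_1$ (not $D_2$) to be consistent, and what the saturated-path clause of Lemma~\ref{lem:order2path} actually needs is that $D'=C_k$ is consistent; this does hold, but neither your argument nor the paper's spells it out, and it is inessential to the main line.
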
 

\begin{proof} 
Let $M$ be a maximum matching of $G$, 
and let $x_1\in \sd{D_1}$ and $x_2\in\sd{D_2}$. 
From Lemma~\ref{lem:order2path}, 
$D_1\non D_2$ implies that 
there is an $M$-saturated path $P$ between $x_1$ and $x_2$. 
From Lemma~\ref{lem:path2order}, 
this implies $D_2 \non D_1$. 
\end{proof}

We now prove Theorem~\ref{thm:gdm} from Theorem~\ref{thm:gdmorder} and Lemma~\ref{lem:non2sym}: 

\begin{theorem} \label{thm:gdm} 
For a graph $G$, 
the triple $(\dmcomp{G}, \gdm, \smile)$ is a TFR poset. 
\end{theorem}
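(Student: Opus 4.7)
The strategy is to leverage Theorem~\ref{thm:gdmorder}, which already establishes that $\gdm$ is a partial order, and verify termwise the three axioms imposed on the forbidden relation $\non$: transitivity with respect to $\gdm$, nonreflexivity, and symmetry.

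First I would dispose of the transitivity axiom, which is essentially a definitional manipulation. Assuming $D_1 \gdm D_2$ and $D_2 \non D_3$, unfolding the latter yields some $D' \in \dmcomp{G}$ with $D_2 \gdm D'$ and $D' \nono D_3$; transitivity of $\gdm$ then gives $D_1 \gdm D'$, so the same $D'$ witnesses $D_1 \non D_3$. Symmetry is already Lemma~\ref{lem:non2sym}, so no further argument is required for that axiom.

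The main work lies in nonreflexivity: I would show that $D \non D$ cannot hold for any $D \in \dmcomp{G}$. Suppose for contradiction it does, and fix a witness $D' \in \dmcomp{G}$ with $D \gdm D'$ and $D' \nono D$, that is, $\sd{D} \subseteq V(D') \setminus \sd{D'}$. If $D = D'$, the inclusion collapses to $\sd{D} \subseteq V(D) \setminus \sd{D}$, forcing $\sd{D} = \emptyset$ and contradicting that $\sd{D}$ is a (nonempty) equivalence class of $\gsim{G}$. Otherwise, I would take any chain $D = C_1 \gdmo \cdots \gdmo C_k = D'$ witnessing $D \gdm D'$ and prune consecutive duplicates (since equal bases define identical DM components) so that $\sd{C_i} \neq \sd{C_{i+1}}$ for every $i$; since $D \neq D'$, the pruned chain still has $k \geq 2$. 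Lemma~\ref{lem:acyclic} then forces $C_1, \ldots, C_k$ to be pairwise vertex-disjoint, so $V(D) \cap V(D') = \emptyset$, contradicting $\emptyset \neq \sd{D} \subseteq V(D) \cap V(D')$.

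The only step with genuine graph-theoretic content is nonreflexivity, and the crux there is applying Lemma~\ref{lem:acyclic} after appropriately trimming the chain witnessing $D \gdm D'$; everything else is bookkeeping on top of the partial-order statement of Theorem~\ref{thm:gdmorder}, the symmetry supplied by Lemma~\ref{lem:non2sym}, and the nonemptiness of bases.
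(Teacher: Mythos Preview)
Your proof is correct and follows the same scaffold as the paper: invoke Theorem~\ref{thm:gdmorder} for the partial order, derive transitivity of $\non$ directly from the definitions, and cite Lemma~\ref{lem:non2sym} for symmetry. The only difference is that the paper dismisses nonreflexivity as ``obvious from the definition,'' whereas you supply an explicit argument via Lemma~\ref{lem:acyclic}; this is a welcome elaboration rather than a genuinely different route.
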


\begin{proof}
Under Theorem~\ref{thm:gdmorder}, 
it  now suffices prove the conditions for $\non$. 
Nonreflexivity and transitivity are obvious from the definition. 
Symmetry is proved by Lemma~\ref{lem:non2sym}. 
\end{proof}

For a graph $G$, the TFR poset $(\dmcomp{G}, \gdm, \smile)$ is uniquely determined. 
We denote this TFR poset by $\gdmposet{G}$. 
We call this canonical structure of a graph $G$ 
that the TIP $\gdmposet{G}$ describes the {\em nonbipartite Dulmage-Mendelsohn \textup{(}DM\textup{)} decomposition} 
of $G$. 
We show in Section~\ref{sec:original} that this is a generalization 
of the classical DM decomposition for bipartite graphs. 

\begin{remark} 
As mentioned previously, 
a DM component is identified by its base. 
Therefore, the nonbipartite DM decomposition 
is essentially the relations between the members of $\gpart{G}$. 
\end{remark} 

\section{Preliminaries on Maximal Barriers} \label{sec:barrierpre}

\subsection{Classical Properties of Maximal Barriers} 
We now present some preliminary properties of maximal barriers 
to be used in Section~\ref{sec:barrier}. 
A barrier is {\em maximal} if it is  inclusion-wise maximal. 
A barrier $X$ is {\em odd-maximal} 
if it is maximal with respect to $D_X$; 
that is, for no $Y\subseteq D_X$, $X \cup Y$ is a barrier. 
A maximal barrier is an odd-maximal barrier.

The next two propositions are classical facts. 
See Lov\'asz and Plummer~\cite{lovasz2009matching}. 

\begin{proposition}\label{prop:oddmaximal2fc}
Let $G$ be a graph, and let $X\subseteq V(G)$ be a barrier. 
Then, $X$ is an odd-maximal barrier 
if and only if every odd component of $G-X$ are factor-critical. 
\end{proposition}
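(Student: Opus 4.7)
The plan is to reduce both directions to a single accounting identity relating the Berge-slack of $X\cup Y$ to local contributions coming from each odd component of $G-X$ touched by $Y$. For a candidate $Y\subseteq D_X$, let $K_1,\ldots,K_k$ be the odd components of $G-X$ meeting $Y$ and set $Y_i := Y\cap V(K_i)\neq\emptyset$. A direct count (even components of $G-X$ contribute nothing since $Y\subseteq D_X$) gives
\[
q_G(X\cup Y) - |X\cup Y| = \bigl(q_G(X)-|X|\bigr) + \sum_{i=1}^{k}\bigl(c_o(K_i-Y_i) - |Y_i| - 1\bigr),
\]
where $c_o(\cdot)$ denotes the number of odd connected components. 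Because $X$ is a barrier, the Berge Formula bounds the left-hand side by $\defi{G}=q_G(X)-|X|$, so the sum on the right is nonpositive; moreover $X\cup Y$ is itself a barrier if and only if that sum is zero.

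For the ``if'' direction I assume every odd component of $G-X$ is factor-critical and show that the sum is strictly negative whenever $Y\neq\emptyset$. For each $i$, pick any $y_i\in Y_i$. By factor-criticality, $K_i-y_i$ has a perfect matching, so applying the Berge Formula inside $K_i-y_i$ to the vertex set $Y_i\setminus\{y_i\}$ yields $c_o(K_i-Y_i) = q_{K_i-y_i}(Y_i\setminus\{y_i\}) \le |Y_i|-1$. Every local term is therefore at most $-2$, the sum is negative, and no $X\cup Y$ with $Y\neq\emptyset$ can be a barrier; thus $X$ is odd-maximal.

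For the ``only if'' direction I argue contrapositively, assuming some odd component $K$ of $G-X$ is not factor-critical. Then there exists $y\in V(K)$ for which $K-y$ has no perfect matching; since $|V(K)|-1$ is even, parity forces $\defi{K-y}\ge 2$. The Berge Formula applied to $K-y$ supplies $W\subseteq V(K)\setminus\{y\}$ with $q_{K-y}(W)-|W|\ge 2$. Setting $Y := \{y\}\cup W\subseteq V(K)\subseteq D_X$, the single local term becomes $q_{K-y}(W) - |W| - 2\ge 0$; combined with the global upper bound of $0$, the sum is exactly zero, so $X\cup Y$ is a barrier. Since $Y\neq\emptyset$, this contradicts odd-maximality of $X$.

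The only mildly subtle point is the parity observation that a non-perfectly-matchable $K-y$ has deficit at least $2$ rather than $1$; everything else is direct bookkeeping plus two applications of the Berge Formula, once globally to $G$ and once locally to $K_i-y_i$ or $K-y$.
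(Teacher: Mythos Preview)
Your argument is correct. The paper itself does not prove Proposition~\ref{prop:oddmaximal2fc}; it is quoted as a classical fact with a reference to Lov\'asz and Plummer~\cite{lovasz2009matching}, so there is no in-paper proof to compare against.

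Your accounting identity
\[
q_G(X\cup Y) - |X\cup Y| \;=\; \bigl(q_G(X)-|X|\bigr) \;+\; \sum_{i=1}^{k}\bigl(c_o(K_i-Y_i) - |Y_i| - 1\bigr)
\]
is verified by a direct component count, and both directions follow cleanly from it. In the ``if'' direction the local Berge bound $c_o(K_i - Y_i)\le |Y_i|-1$ (via a perfect matching of $K_i - y_i$) makes every term at most $-2$, so the sum is strictly negative. In the ``only if'' direction the parity observation $\defi{K-y}\ge 2$ gives a $W$ making the single local term nonnegative; combined with the global upper bound of $0$ this pins the term at $0$, producing a strictly larger barrier $X\cup Y$ with $Y\subseteq D_X$. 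Note that this step incidentally forces $\defi{K-y}=2$, which is consistent but not something you need to isolate. The proof is complete and self-contained; it is essentially the standard deficiency-counting argument one finds in the textbook treatment.
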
 

\begin{proposition}\label{prop:oddmaximal2maximal}
Let $G$ be a graph. 
An odd-maximal barrier is a maximal barrier 
if and only if $C_X = \emptyset$. 
\end{proposition}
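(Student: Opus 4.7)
The plan is to prove the two directions separately, with the Berge formula and an elementary parity argument as the only tools.

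For the ``only if'' direction, I would argue by contrapositive. Assume $C_X \neq \emptyset$, and pick any vertex $v$ lying in an even component $K$ of $G - X$. Since $|V(K)|$ is even, $|V(K)| - 1$ is odd, so the components of $K - v$ must include an odd number of odd components, and in particular at least one. Every other component of $G - X$ is untouched when we pass to $G - (X \cup \{v\})$, so I obtain $q_G(X \cup \{v\}) \geq q_G(X) + 1$ while $|X \cup \{v\}| = |X| + 1$. Consequently $q_G(X \cup \{v\}) - |X \cup \{v\}| \geq \defi{G}$, and by the Berge formula equality must hold. Thus $X \cup \{v\}$ is a barrier strictly containing $X$, contradicting the maximality of $X$.

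For the ``if'' direction, suppose $X$ is odd-maximal and $C_X = \emptyset$. Then $V(G) \setminus X = D_X \cup C_X = D_X$, so every proper superset $X' \supsetneq X$ has the form $X \cup Y$ with $\emptyset \neq Y \subseteq D_X$. By the definition of odd-maximality, no such $X \cup Y$ is a barrier, so $X$ is inclusion-wise maximal among barriers.

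The only nontrivial step is the parity count in the forward direction, and this is entirely routine; I do not anticipate any substantial obstacle in executing the proof.
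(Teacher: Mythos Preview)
Your proof is correct. Note, however, that the paper does not give its own proof of this proposition: it is stated as a classical fact with a reference to Lov\'asz and Plummer~\cite{lovasz2009matching}, so there is no in-paper argument to compare against. Your approach is the standard elementary one---the parity count in the forward direction and the direct appeal to the definition of odd-maximality in the backward direction are exactly what one would expect, and both go through without difficulty. One cosmetic remark: you announce a contrapositive argument for the ``only if'' direction but then phrase the conclusion as ``contradicting the maximality of $X$''; it would read more cleanly to say ``hence $X$ is not maximal'' and close the contrapositive.
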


\subsection{Generalization of Lov\'asz's Canonical Partition Theorem} 

In this section, we explain a known theorem about 
the structure of a given odd-maximal barrier~\cite{DBLP:conf/cocoa/Kita13}. 
This theorem is a generalization of {\em Lov\'asz's canonical partition theorem}~\cite{lovasz1972structure, lovasz2009matching, DBLP:conf/cocoa/Kita13} for general graphs, which is originally for factor-connected graphs. 
This  theorem contains the classical result about the relationship between 
maximal barriers and the Gallai-Edmonds decomposition, 
which states that $A(G)$ of a graph $G$ is the intersection of all maximal barriers~\cite{lovasz2009matching}. 

\begin{theorem}[Kita~\cite{DBLP:conf/cocoa/Kita13}] \label{thm:gl} 
Let $G$ be a graph and $X\subseteq V(G)$ be an odd-maximal barrier of $G$. 
Then, there exist $S_1,\ldots, S_k\in\gpart{G}$, where $k\ge 1$, 
such that 
$X = S_1 \dot\cup \cdots \dot\cup S_k$ 
and $D_X = \parvcoup{G}{S_1} \dot\cup \cdots \dot\cup \parvcoup{G}{S_k}$. 
The odd components of $G-X$ are 
the connected components of $G[\parvcoup{G}{S_i}]$, where $i$ is taken over all $\{1,\ldots, k\}$. 
\end{theorem}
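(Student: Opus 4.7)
The plan is to use the basilica decomposition's path lemmas (Lemmas~\ref{lem:basilica2path} and~\ref{lem:noear}) together with the defining properties of a barrier and a maximum matching to force $X$ into alignment with $\gpart{G}$. As preliminaries, I would observe that any barrier is disjoint from $D(G)$, so $X\subseteq V(G)\setminus D(G)$ and each vertex of $X$ lies in a unique class of $\gpart{G}$; by Proposition~\ref{prop:oddmaximal2fc}, every odd component of $G-X$ is factor-critical, which supplies the freedom to adjust near-perfect matchings on these components.

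The main step is to show that for every $v\in X$, the class $S\in\gpart{G}$ containing $v$ is contained in $X$. Suppose for contradiction $w\in S\setminus X$, and fix a maximum matching $M$ of $G$. Lemma~\ref{lem:basilica2path}(i) supplies an $M$-forwarding path $P$ from $v$ to $w$ inside $S\cup\parvcoup{G}{S}$ and simultaneously rules out any $M$-saturated path between them. My approach is to analyze the component of $G-X$ containing $w$ together with the alternating structure of $P$: using the factor-criticality of the odd components of $G-X$ to reroute their near-perfect matchings, I would convert $P$ into an $M$-alternating structure whose symmetric difference with $M$ produces either a maximum matching of $G$ exposing $v$ (contradicting $v\notin D(G)$) or an $M$-saturated path between two vertices of $S$, contradicting the nonexistence clause of Lemma~\ref{lem:basilica2path}(i). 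Lemma~\ref{lem:noear} constrains the shape of any $M$-ear arising in this surgery and eliminates the remaining pathological configurations. This yields $X=S_1\dot\cup\cdots\dot\cup S_k$ with each $S_i\in\gpart{G}$.

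Given the partition of $X$, the inclusion $\parvcoup{G}{S_i}\subseteq D_X$ follows from Lemma~\ref{lem:basilica2path}(ii) by tracking the $M$-saturated path from any $s\in S_i\subseteq X$ to any $t\in\parvcoup{G}{S_i}$ and noting that this path forces the component of $G-X$ containing $t$ to be odd. The reverse inclusion $D_X\subseteq\bigcup_i\parvcoup{G}{S_i}$ follows by taking an odd component $K$ of $G-X$ and applying Theorem~\ref{thm:cor} to $\parNei{G}{K}\cap X$, which must then lie in a single class $S_j$, whence $V(K)\subseteq\parvcoup{G}{S_j}$. The final identification of odd components of $G-X$ with connected components of $G[\parvcoup{G}{S_i}]$ then follows from this partition together with Theorem~\ref{thm:cor}. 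The hardest part is the matching-surgery step forcing $S\subseteq X$: because the $M$-partner of $w$ may lie outside $P$, the symmetric difference $M\triangle E(P)$ need not be a matching, and one must carefully chain reroutings within the factor-critical odd components of $G-X$ and along paths inside the DM component $\ds{S}$ to produce a genuine contradiction.
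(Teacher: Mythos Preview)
The paper does not prove Theorem~\ref{thm:gl}: it is quoted as a known result from Kita~\cite{DBLP:conf/cocoa/Kita13} and used as a black box in Section~\ref{sec:barrier}. There is therefore no ``paper's own proof'' to compare your proposal against; any proof you write here would be supplying something the present paper deliberately omits.

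As for the viability of your sketch, the overall architecture (show $X$ is a union of $\gsim{G}$-classes, then identify $D_X$) is the right one, but two of your steps are not yet arguments. First, the ``matching-surgery'' paragraph is a statement of intent rather than a proof: you would need to specify exactly which alternating walk you build from $P$ and the near-perfect matchings of the factor-critical odd components, and why it yields either an $M$-saturated $S$--$S$ path or an exposure of $v$; the mere existence of the $M$-forwarding path from Lemma~\ref{lem:basilica2path}(i) is not enough without that construction. Second, your handling of $D_X$ misapplies the cited tools. An $M$-saturated path from $s\in S_i$ to $t\in\parvcoup{G}{S_i}$ does not by itself force the $G-X$ component of $t$ to be odd, and Theorem~\ref{thm:cor} concerns connected components of $G[\vparup{G}{H}]$ for a fixed factor-component $H$, not arbitrary odd components of $G-X$; you cannot invoke it to conclude that $\parNei{G}{K}\cap X$ lies in a single class $S_j$ without first relating $K$ to the basilica structure. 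A cleaner route to both inclusions is to use the standard barrier facts (each vertex of $X$ is $M$-matched into a distinct odd component, each odd component has a near-perfect matching in $M$) together with the definition of $\parvcoup{G}{S}$ directly, rather than Theorem~\ref{thm:cor}.
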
 

The next statement can be derived from Theorem~\ref{thm:gl} as a corollary. 

\begin{corollary} \label{cor:gkl}
Let $G$ be a graph. 
For each $S\in\gpart{G}$, 
$G[\parvcoup{G}{S}]$ consists of $|S| + \defi{G[S \cup \parvcoup{G}{S}]}$ connected components, 
which are  factor-critical. 
If $S\in\pargpart{G}{H}$ holds for some $H\in\const{G}$, 
then $\defi{G[S \cup \parvcoup{G}{S}]} = 0$; 
otherwise, $\defi{G[S \cup \parvcoup{G}{S}]} > 0$. 
Let $\mathcal{S} := \bigcup \{  S \in \pargpart{G}{H} : H\in\inconst{G} \mbox{ and } V(H)\cap X \neq \emptyset \}$. 
Then, $\Sigma_{S \in \mathcal{S}}  \defi{G[S \cup \parvcoup{G}{S}]}  = \defi{G}$. 
\end{corollary}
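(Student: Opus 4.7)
Proof plan: Fix $S \in \gpart{G}$ with $S \in \pargpart{G}{H}$, and set $\tilde{G} := G[S \cup \parvcoup{G}{S}]$. The plan is to exhibit, in each of the two cases $H \in \inconst{G}$ and $H \in \const{G}$, an odd-maximal barrier $X$ of $G$ that has $S$ as one of the blocks of its Theorem~\ref{thm:gl}-decomposition, and then to read off factor-criticality of the components of $G[\parvcoup{G}{S}]$ from Theorem~\ref{thm:gl} and Proposition~\ref{prop:oddmaximal2fc}, with the component count obtained by subtracting the contributions of the other blocks from the total odd-component count $|X| + \defi{G}$.

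For the inconsistent case $H \in \inconst{G}$, I take $X = A(G)$: this is odd-maximal by Theorem~\ref{thm:ge} combined with Proposition~\ref{prop:oddmaximal2fc}, and by Fact~\ref{fact:inconst2basilica} its Theorem~\ref{thm:gl}-blocks are exactly the sets $V(H') \cap A(G)$ for $H' \in \inconst{G}$, so $S = V(H) \cap A(G)$ is among them, and also $\tilde{G} = H$ with $\parvcoup{G}{S} = V(H) \cap D(G)$. Restricting a maximum matching $M$ of $G$ to $H$ and using Theorem~\ref{thm:ge}, each vertex of $S$ is $M$-matched to a distinct factor-critical odd component of $G - A(G)$ inside $V(H)$ while the remaining components each expose exactly one vertex, so $\defi{\tilde{G}} = \defi{H} = q_H(S) - |S| > 0$ and the component count equals $q_H(S) = |S| + \defi{\tilde{G}}$ as claimed.

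For the consistent case $H \in \const{G}$, I take $X$ to be an odd-maximal barrier extending $A(G)$ by a family of $\gsim{G}$-classes supplied by the Lovász canonical partition of the factor-connected factorizable piece of $G[C(G)]$ containing $V(H)$, chosen so that $S$ appears as one of the blocks; the Berge identity and odd-maximality follow because each added class $S_j$ is a barrier of its ambient subgraph with $q = |S_j|$ and complement consisting of factor-critical odd components (Proposition~\ref{prop:oddmaximal2fc}). Theorem~\ref{thm:gl} then yields factor-criticality of the components of $G[\parvcoup{G}{S}]$, and subtracting the inconsistent contribution $|A(G)| + \defi{G}$ (from the preceding case) and the other consistent block contributions $\sum_{j : S_j \neq S} |S_j|$ from $|X| + \defi{G}$ leaves exactly $|S|$ components, forcing $\defi{\tilde{G}} = 0$. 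Finally for the sum formula in Part (c), read with $\mathcal{S} = \bigcup_{H' \in \inconst{G}} \pargpart{G}{H'}$: by Fact~\ref{fact:inconst2ge}, the Berge identity at $A(G)$ decomposes as $\defi{G} = q_G(A(G)) - |A(G)| = \sum_{H' \in \inconst{G}} (q_{H'}(V(H') \cap A(G)) - |V(H') \cap A(G)|)$, and each summand coincides with $\defi{\tilde{G}_S}$ for the corresponding $S \in \mathcal{S}$ by the inconsistent-case analysis.

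Main obstacle: I expect the hardest step to be the consistent case, specifically the construction of the odd-maximal barrier $X$ with $S$ as a single Theorem~\ref{thm:gl}-block. This requires extending the classical Lovász canonical partition to the factor-connected factorizable pieces of $G[C(G)]$ and pasting these partitions together across distinct factor-components while preserving the Berge identity and the factor-criticality needed for odd-maximality; once $X$ is in hand, the rest reduces to the clean subtraction and Gallai-Edmonds bookkeeping described above.
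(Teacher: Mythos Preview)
The paper gives no proof of this corollary beyond the remark that it ``can be derived from Theorem~\ref{thm:gl} as a corollary,'' so your plan---for each $S\in\gpart{G}$, exhibit an odd-maximal barrier $X$ having $S$ as one of its Theorem~\ref{thm:gl}-blocks, then read off factor-criticality via Proposition~\ref{prop:oddmaximal2fc} and count components via the Berge identity---is precisely the intended route, and your treatment of the inconsistent case and of the final summation over $\mathcal{S}$ is clean and correct.

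Two comments on the consistent case. First, you are working harder than necessary in constructing $X$ by ``pasting'' canonical-partition classes across several factor-components. The existence of a maximal barrier containing a given $S\in\gpart{G}$ is already part of the (generalized) Lov\'asz canonical-partition theory that Theorem~\ref{thm:gl} comes from; you may simply cite that and skip the explicit pasting. Second, your subtraction argument (``the other consistent blocks contribute $\sum_{j:S_j\neq S}|S_j|$ components'') as written presupposes for those other blocks the very conclusion you are proving for $S$. This circularity is easy to remove: once Theorem~\ref{thm:gl} and Proposition~\ref{prop:oddmaximal2fc} give that every component of $G[\parvcoup{G}{S_j}]$ is factor-critical, restrict a maximum matching $M$ of $G$ to $\tilde G_j=G[S_j\cup\parvcoup{G}{S_j}]$ and use Lemma~\ref{lem:basilica2path} to see that every $M$-edge meeting $S_j\cup\parvcoup{G}{S_j}$ lies inside $\tilde G_j$; hence $\tilde G_j$ is factorizable, $\defi{\tilde G_j}=0$, and $S_j$ is a barrier of $\tilde G_j$ with exactly $|S_j|$ odd (factor-critical) components. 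This handles all consistent blocks at once, without any inductive bookkeeping.
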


\section{Canonical Characterization of Maximal Barriers} \label{sec:barrier} 

We now derive the characterization of the family of maximal barriers 
in general graphs, using the nonbipartite DM decomposition. 
In this section, unless otherwise stated, let $G$ be a graph. 
It is a known fact that a graph has an exponentially many number of maximal barriers, 
however the family of maximal barriers can be fully characterized 
in terms of ideals of $\gdmposet{G}$.

\begin{definition} 
For $\mathcal{I}\subseteq \dmcomp{G}$, 
 the {\em normalization} of $\mathcal{I}$ 
is the set $\mathcal{I} \cup \idmcomp{G}$. 
A set $\mathcal{I}' \subseteq \dmcomp{G}$ is said to be {\em normalized} 
if $\mathcal{I'} = \mathcal{I} \cup \idmcomp{G}$ 
for some $\mathcal{I} \subseteq \dmcomp{G}$. 
\end{definition}

From Lemma~\ref{lem:inconst2dm}, 
the next statement can be easily observed. 

\begin{observation} \label{obs:norm} 
The normalization of an upper ideal is an upper ideal. 
The normalization of a legitimate upper ideal is legitimate. 
\end{observation}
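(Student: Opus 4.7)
The plan is to unwind the two definitions and apply Lemma~\ref{lem:inconst2dm} directly. That lemma tells us that an inconsistent DM component $C$ is essentially isolated in the TFR structure: it has no strict $\gdm$-upper bound other than itself, and participates in no $\non$-forbidden pair. Combined with the symmetry of $\non$ (Lemma~\ref{lem:non2sym}) and the nonreflexivity axiom of a TFR poset, this makes adjoining $\idmcomp{G}$ to any upper ideal completely inert, both for the ideal property and for legitimacy.

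For the first assertion, I would fix an upper ideal $\mathcal{I}\subseteq\dmcomp{G}$, set $\mathcal{I}':=\mathcal{I}\cup\idmcomp{G}$, and verify upward closure under $\gdm$ case-by-case. Take $D\in\mathcal{I}'$ and $D'\in\dmcomp{G}$ with $D\gdm D'$. If $D\in\mathcal{I}$, then $D'\in\mathcal{I}\subseteq\mathcal{I}'$ by hypothesis on $\mathcal{I}$. If $D\in\idmcomp{G}$, Lemma~\ref{lem:inconst2dm} forces $D'=D$, so $D'\in\idmcomp{G}\subseteq\mathcal{I}'$. Either way $D'\in\mathcal{I}'$, so $\mathcal{I}'$ is an upper ideal.

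For the second assertion, assume $\mathcal{I}$ is a legitimate upper ideal and, aiming at a contradiction, suppose $D_1,D_2\in\mathcal{I}'$ satisfy $D_1\non D_2$. If both $D_1,D_2\in\mathcal{I}$, legitimacy of $\mathcal{I}$ is violated. Otherwise at least one of them lies in $\idmcomp{G}$; using the symmetry of $\non$ (Lemma~\ref{lem:non2sym}) we may assume $D_1\in\idmcomp{G}$. Then Lemma~\ref{lem:inconst2dm} rules out $D_1\non D_2$ for any $D_2\neq D_1$, which forces $D_2=D_1$; but $D_1\non D_1$ contradicts the nonreflexivity condition in the definition of a TFR poset. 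Hence no forbidden pair exists inside $\mathcal{I}'$, and $\mathcal{I}'$ is legitimate.

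There is no real obstacle; the observation is a short bookkeeping argument from Lemma~\ref{lem:inconst2dm} together with the TFR-poset axioms. The only subtlety worth flagging is that the legitimacy step genuinely needs symmetry of $\non$ in order to reduce the mixed case, where exactly one of $D_1,D_2$ is inconsistent, to the configuration in which Lemma~\ref{lem:inconst2dm} can be applied.
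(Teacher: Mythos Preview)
Your argument is correct and matches the paper's approach: the paper does not spell out a proof but simply says the observation follows from Lemma~\ref{lem:inconst2dm}, and your case analysis is exactly the routine verification that justifies this. Your remark that the mixed case in the legitimacy check uses the symmetry of $\non$ (Lemma~\ref{lem:non2sym}, already established by this point) is accurate and worth noting.
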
 

From Theorem~\ref{thm:gl}, 
the next lemma characterizes the family of odd-maximal barriers. 

\begin{lemma} \label{lem:oddmaximal} 
Let $G$ be a graph. 
A set of vertices $X\subseteq V(G)$ is an odd-maximal barrier 
if and only if 
there exists a  legitimate normalized upper ideal $\mathcal{I}$ 
of the TFR poset $\gdmposet{G}$ 
such that $X = \bigcup \{ \sd{C} : C\in\mathcal{I} \}$. 
\end{lemma}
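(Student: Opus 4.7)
The plan is to prove the two implications separately. Theorem~\ref{thm:gl} is the workhorse connecting (odd-maximal) barriers with the basilica decomposition, and Lemma~\ref{lem:order2path} converts the TFR-poset relations $\gdm$ and $\non$ into $M$-alternating paths.

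For the forward direction, start with an odd-maximal barrier $X$, apply Theorem~\ref{thm:gl} to obtain $S_1, \ldots, S_k \in \gpart{G}$ with $X = S_1 \dot\cup \cdots \dot\cup S_k$ and $D_X = \parvcoup{G}{S_1} \dot\cup \cdots \dot\cup \parvcoup{G}{S_k}$, and set $\mathcal{I} := \{\ds{S_1}, \ldots, \ds{S_k}\} \cup \idmcomp{G}$. Three properties must be checked. The identity $X = \bigcup\{\sd{C} : C \in \mathcal{I}\}$ follows from $A(G) \subseteq X$, which in turn follows from $D(G) \subseteq D_X$ together with Fact~\ref{fact:inconst2basilica} forcing each $V(H) \cap A(G)$ (for inconsistent $H$) to appear among the $S_i$'s. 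To see that $\mathcal{I}$ is an upper ideal, suppose $\ds{S_i} \gdmo \ds{T}$; some $t \in T$ is a neighbor of $\parvcoup{G}{S_i} \subseteq D_X$, and since odd components of $G-X$ are pairwise disjoint (and non-adjacent to $C_X$ in $G$), $t$ must lie in $X$, so $T = S_j$ for some $j$ (with Observation~\ref{obs:norm} handling normalization). For legitimacy, suppose $C_1 \non C_2$ with $C_1, C_2 \in \mathcal{I}$; Lemma~\ref{lem:inconst2dm} forces both to be consistent, so $\sd{C_2}$ is nonempty; unpacking $\non$, one obtains $D' \in \dmcomp{G}$ with $C_1 \gdm D'$ and $\sd{C_2} \subseteq \parvcoup{G}{\sd{D'}}$; the upper-ideal property gives $D' \in \mathcal{I}$, so $\sd{D'} = S_\ell$ for some $\ell$, whence Theorem~\ref{thm:gl} gives $\parvcoup{G}{\sd{D'}} \subseteq D_X$; but $\sd{C_2} \subseteq X$, contradicting $X \cap D_X = \emptyset$.

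For the reverse direction, let $\mathcal{I}$ be a legitimate normalized upper ideal and put $X := \bigcup\{\sd{C} : C \in \mathcal{I}\}$. The plan is to verify the barrier equation $q_G(X) - |X| = \defi{G}$ using Corollary~\ref{cor:gkl} and then invoke Proposition~\ref{prop:oddmaximal2fc}. Legitimacy implies that $\parvcoup{G}{\sd{C}}$ is disjoint from $X$ for every $C \in \mathcal{I}$: an intersection with some $\sd{C'} \subseteq X$ would, by the basilica structure, give $\sd{C'} \subseteq \parvcoup{G}{\sd{C}}$ and thus $C \nono C'$, contradicting legitimacy. A parallel argument, using the upper-ideal property, shows that regions of DM-components outside $\mathcal{I}$ contribute even components (i.e., lie in $C_X$) and do not intrude upon any $\parvcoup{G}{\sd{C}}$ with $C \in \mathcal{I}$. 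Hence the connected components of $G[\parvcoup{G}{\sd{C}}]$ (each factor-critical by Corollary~\ref{cor:gkl}) are precisely the odd components of $G-X$ lying in $\parvcoup{G}{\sd{C}}$. Summing the counts $|\sd{C}| + \defi{G[\sd{C} \cup \parvcoup{G}{\sd{C}}]}$ across $C \in \mathcal{I}$ and invoking the deficiency identity at the end of Corollary~\ref{cor:gkl} for the inconsistent contributions yields $q_G(X) - |X| = \defi{G}$; Proposition~\ref{prop:oddmaximal2fc} then concludes odd-maximality.

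The main obstacle will be the reverse direction's argument that DM-components outside $\mathcal{I}$ contribute only even components to $G - X$: this requires using the upper-ideal property together with the factor-critical structure in Corollary~\ref{cor:gkl} to show that the leftover regions fit together as perfectly matchable pieces. The forward direction's upper-ideal step also needs some care to rule out the neighbor $t$ lying in $D_X \setminus \parvcoup{G}{S_i}$ or in $C_X$, using the disjointness of the basilica decomposition provided by Theorem~\ref{thm:gl}.
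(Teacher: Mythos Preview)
Your approach is essentially the paper's: both directions rest on Theorem~\ref{thm:gl}, Corollary~\ref{cor:gkl}, and Proposition~\ref{prop:oddmaximal2fc}, and your forward direction (decompose $X$ via Theorem~\ref{thm:gl}, then verify upper ideal, legitimacy, and normalization) matches the paper almost line for line, only with more detail spelled out.

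The one tactical difference worth noting is in the reverse direction. You flag as your ``main obstacle'' showing that DM components outside $\mathcal{I}$ contribute only even components to $G-X$. The paper avoids this entirely: from the upper-ideal property it gets $\parNei{G}{\parvcoup{G}{\sd{C}}}\subseteq X$ for each $C\in\mathcal{I}$ directly from the definition of $\gdmo$, so the connected components of each $G[\parvcoup{G}{\sd{C}}]$ are already connected components of $G-X$; Corollary~\ref{cor:gkl} then exhibits at least $|X|+\defi{G}$ factor-critical (hence odd) components of $G-X$, and the Berge formula supplies the reverse inequality $q_G(X)-|X|\le\defi{G}$ automatically. No separate analysis of the ``leftover'' region is needed. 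Your plan works, but this shortcut removes the obstacle you anticipated. Also, Lemma~\ref{lem:order2path} is not actually used anywhere in either argument, so you can drop it from the plan.
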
 
\begin{proof} 
We first prove the sufficiency. 
Let $X$ be an odd-maximal barrier, and, under Theorem~\ref{thm:gl}, let $S_1,\ldots, S_k$, where $k\ge 1$, 
be the members of $\gpart{G}$ such that $X = S_1\cup \cdots \cup S_k$.
Let $\mathcal{I} := \{\ds{S_i} : i = 1,\ldots, k\}$. 
We prove that $\mathcal{I}$ is a legitimate normalized upper ideal of $\gdmposet{G}$. 
For proving $\mathcal{I}$ is an upper ideal,  
it suffices to prove that,  for any $C\in\dmcomp{G}$,  
$\ds{S_i} \gdmo C$ implies $\sd{C} \subseteq X$;  
and, this is obviously confirmed from Theorem~\ref{thm:gl}.  
It is also confirmed by Theorem~\ref{thm:gl} that this upper ideal is normalized and legitimate.  

Next, we prove the necessity. 
Let $\mathcal{I}$ be a legitimate normalized upper ideal of $\gdmposet{G}$, 
and let $X = \bigcup \{ \sd{C} : C\in\mathcal{I} \}$. 
From the definition of $\gdmo$, 
$\mathcal{I}$ being an upper ideal implies 
that, for each $C\in\mathcal{I}$, 
$\parNei{G}{\parvcoup{G}{\sd{C}}} \subseteq X$; 
$\mathcal{I}$ being legitimate implies 
that $\parvcoup{G}{\sd{C}} \cap X = \emptyset$. 
Hence, each connected component of $G[\parvcoup{G}{\sd{C}}]$ 
is also a connected component of $G-X$ that is factor-critical.  
Therefore, 
Corollary~\ref{cor:gkl} 
implies that $G-X$ has $|X| + \defi{G}$ odd components, 
and accordingly $X$ is a barrier. 
By Theorem~\ref{thm:gl}, these odd components are factor-critical, 
and therefore, Proposition~\ref{prop:oddmaximal2fc} implies that $X$ is odd-maximal. 
\end{proof}

From Lemma~\ref{lem:oddmaximal} and Proposition~\ref{prop:oddmaximal2maximal}, 
the family of maximal barriers is now characterized:

\begin{theorem} \label{thm:maximalbarrier}
Let $G$ be a graph. 
A set of vertices $X\subseteq V(G)$ is a maximal barrier 
if and only if 
there exists a  spanning legitimate normalized upper ideal $\mathcal{I}$ 
of the TFR poset $\gdmposet{G}$ 
such that $X = \bigcup \{ \sd{C} : C\in\mathcal{I} \}$. 
\end{theorem}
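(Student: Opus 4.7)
The plan is to reduce Theorem~\ref{thm:maximalbarrier} to the combination of Lemma~\ref{lem:oddmaximal}, which already supplies the analogous characterization of odd-maximal barriers, and Proposition~\ref{prop:oddmaximal2maximal}, which upgrades oddness to maximality via the condition $C_X = \emptyset$. Under the correspondence $\mathcal{I} \leftrightarrow X = \bigcup\{\sd{C} : C \in \mathcal{I}\}$ supplied by the former, the whole statement comes down to showing that $C_X = \emptyset$ is equivalent to $\mathcal{I}$ being spanning.

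To unpack $C_X$ I would first note that the proof of Lemma~\ref{lem:oddmaximal} (through Theorem~\ref{thm:gl}) yields $D_X = \bigcup\{\parvcoup{G}{\sd{C}} : C \in \mathcal{I}\}$, and that the extension of $\pargpart{G}{H}$ to inconsistent factor-components recorded after Fact~\ref{fact:inconst2basilica}, combined with the normalization $\idmcomp{G} \subseteq \mathcal{I}$, already places the whole of $D(G)$ inside $D_X$. Since the bases $\{\sd{D} : D \in \dmcomp{G}\}$ partition $A(G) \cup C(G)$, this leaves
\[
V(G) \setminus X \setminus D_X = \bigcup \{\sd{D'} : D' \in \dmcomp{G} \setminus \mathcal{I}\} \setminus D_X,
\]
so $C_X = \emptyset$ amounts to $\sd{D'} \subseteq D_X$ for every $D' \in \dmcomp{G} \setminus \mathcal{I}$. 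The crux is then the following structural claim: if $\sd{D'} \cap \parvcoup{G}{\sd{C}} \neq \emptyset$ for some $C \in \mathcal{I}$, then $\sd{D'} \subseteq \parvcoup{G}{\sd{C}}$. Indeed, $\sd{D'}$ lies inside a single factor-component $H'$; any witnessing vertex forces $V(H') \subseteq \vparup{G}{H}$ with $\sd{C} \in \pargpart{G}{H}$, and since $H'$ is connected $V(H')$ is confined to a single connected component $K$ of $G[\vparup{G}{H}]$. By the definition of $\parup{G}{\sd{C}}$ after Theorem~\ref{thm:cor}, this $K$ is either wholly inside $\vparup{G}{\sd{C}}$ or wholly disjoint from it; the hypothesis excludes the first alternative, so $V(H') \subseteq \parvcoup{G}{\sd{C}}$.

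Combining these pieces, $C_X = \emptyset$ is equivalent to the statement that for each $D' \in \dmcomp{G} \setminus \mathcal{I}$ there exists $C \in \mathcal{I}$ with $\sd{D'} \subseteq \parvcoup{G}{\sd{C}}$, that is, $C \nono D'$, hence $C \non D'$ and, by the symmetry of Lemma~\ref{lem:non2sym}, also $D' \non C$. This is exactly the spanning condition on $\mathcal{I}$, which together with Proposition~\ref{prop:oddmaximal2maximal} completes the reduction. The main obstacle I expect is the structural claim above; although it should follow from a careful unfolding of the definitions of $\parup{G}{S}$, $\vparup{G}{S}$ and $\parvcoup{G}{S}$ in Section~\ref{sec:basilica}, it is not isolated as a lemma in the paper, so it will probably have to be either proved explicitly here or cited from the preparatory work \cite{DBLP:conf/isaac/Kita12, DBLP:journals/corr/abs-1205-3816}.
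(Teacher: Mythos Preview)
Your reduction is exactly the one the paper uses: it derives Theorem~\ref{thm:maximalbarrier} directly from Lemma~\ref{lem:oddmaximal} and Proposition~\ref{prop:oddmaximal2maximal}, and in fact the paper records nothing beyond that one-line citation. Your proposal is thus not only correct but considerably more detailed than the paper's own treatment, since you actually spell out why the condition $C_X=\emptyset$ matches the spanning condition on $\mathcal{I}$; the structural claim you isolate (that $\sd{D'}$ meeting $\parvcoup{G}{\sd{C}}$ forces $\sd{D'}\subseteq\parvcoup{G}{\sd{C}}$) is indeed the hinge of that equivalence, and your sketch via Theorem~\ref{thm:cor} and the connectedness of the ambient factor-component is the right way to establish it.
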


\section{Original DM Decomposition for Bipartite Graphs} \label{sec:original}

In this section, we explain the original DM decomposition for bipartite graphs, 
and prove this from our result in Section~\ref{sec:gdm}.  
In the remainder of this section, unless stated otherwise, 
let $G$ be a bipartite graph with color classes $A$ and $B$, 
and let $W\in \{A, B\}$. 

\begin{definition} 
The binary relations $\dmo{W}$ and $\dm{W}$ over $\ecomp{G}$ are defined as follows: 
for $G_1, G_2\in\ecomp{G}$, 
let $G_1 \dmo{W} G_2$ if $G_1 = G_2$ or if $E_G[W\cap V(G_2), V(G_1)\setminus W]\neq \emptyset$; 
let $G_1 \dm{W} G_2$ if 
there exist $H_1, \ldots, H_k\in\ecomp{G}$, where $k\ge 1$, 
such that $H_1 = G_1$, $H_k = G_2$, and $H_1 \dmo{W} \cdots \dmo{W} H_k$. 
\end{definition}  

Note that $G_1 \dm{A} G_2$ holds if and only if $G_2 \dm{B} G_1$ holds. 
The next theorem determines the classical DM decomposition. 

\begin{theorem}[Dulmage and Mendelsohn~\cite{dm1958, dm1959, dm1963, lovasz2009matching}]\label{thm:dm}
Let $G$ be a bipartite graph with color classes $A$ and $B$, 
and let $W\in \{A, B\}$. 
Then, the binary relation $\dm{W}$ is a partial order over $\ecomp{G}$. 
\end{theorem}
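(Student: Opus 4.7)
The plan is to deduce Theorem~\ref{thm:dm} from the already-established Theorem~\ref{thm:gdmorder} by exhibiting $\dm{W}$ as a pull-back of the partial order $\gdm$ on $\dmcomp{G}$ along a natural map $\phi_W\colon \ecomp{G}\to \dmcomp{G}$. Reflexivity and transitivity of $\dm{W}$ are immediate from the definition, so the content of the theorem lies in antisymmetry, which will descend from that of $\gdm$.

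First I would specialize the basilica decomposition to the bipartite setting. Since a bipartite factor-critical graph has exactly one vertex, Theorem~\ref{thm:order} renders the partial order $\yield$ on $\ecomp{G}$ trivial, so $\parup{G}{H}=\vparup{G}{H}=\vparup{G}{S}=\emptyset$ for every $H\in\ecomp{G}$ and every $S\in\pargpart{G}{H}$. The classical bipartite Kotzig-Lov\'asz theorem gives $\pargpart{G}{H}=\{A\cap V(H),\,B\cap V(H)\}$ for consistent $H$, while Fact~\ref{fact:inconst2basilica} (together with the empty-base convention) gives $\pargpart{G}{H}=\{V(H)\cap A(G)\}$ for inconsistent $H$. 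Unwinding definitions, whenever $W\cap V(H)\in\pargpart{G}{H}$ one derives the identity
\[
 \parvcoup{G}{W\cap V(H)} \;=\; (V(G)\setminus W)\cap V(H).
\]

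With this in hand, set $\mathcal{C}_W:=\{H\in\ecomp{G}: W\cap V(H)\in\pargpart{G}{H}\}$ and define $\phi_W(H):=\ds{W\cap V(H)}$ for $H\in\mathcal{C}_W$. The edge condition $E_G[W\cap V(H_2),\,V(H_1)\setminus W]\ne\emptyset$ defining $H_1\dmo{W} H_2$ coincides, via the identity above, with the edge condition $\parNei{G}{\parvcoup{G}{W\cap V(H_1)}}\cap(W\cap V(H_2))\ne\emptyset$ defining $\phi_W(H_1)\gdmo\phi_W(H_2)$. Passing to transitive closures, $\phi_W$ becomes an order embedding of $(\mathcal{C}_W,\dm{W})$ into $(\dmcomp{G},\gdm)$, so antisymmetry of $\dm{W}$ on $\mathcal{C}_W$ is inherited from Theorem~\ref{thm:gdmorder}.

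The main obstacle is extending antisymmetry from $\mathcal{C}_W$ to all of $\ecomp{G}$: an inconsistent $H\notin\mathcal{C}_W$ has $V(H)\cap D(G)\subseteq W$, and for such $H$ the map $\phi_W$ is not directly defined. I would sidestep this by showing such $H$ is minimal in $(\ecomp{G},\dm{W})$. Indeed, by the Gallai-Edmonds structure theorem (Theorem~\ref{thm:ge}) every edge incident to $V(H)\cap D(G)$ either lies in a single odd component of $G-A(G)$ or goes to $A(G)$ as an allowed edge; in both cases it is confined to $V(H)$. Consequently no edge witnesses $H'\dmo{W} H$ for any $H'\ne H$, so $H$ is minimal, and minimal elements cannot sit nontrivially in an antisymmetry violation. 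Together with the embedding above, this yields antisymmetry of $\dm{W}$ on the whole of $\ecomp{G}$, completing the proof.
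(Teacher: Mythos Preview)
Your proposal is correct and follows essentially the same route as the paper: your set $\mathcal{C}_W$ coincides with the paper's $\const{G}\cup\ainconst{G}{W}$, your map $\phi_W$ is the paper's $f_W$ (Observations~\ref{obs:bi2basilica} and~\ref{obs:gdm2dm}), and your minimality argument for inconsistent $H\notin\mathcal{C}_W$ is exactly Observation~\ref{obs:dm2inconst}. The only cosmetic difference is that you spell out the Gallai--Edmonds reasoning behind the minimality claim, whereas the paper leaves it as an observation.
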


We call the poset $(\ecomp{G}, \dm{W})$ proved by Theorem~\ref{thm:dm} 
the {\em Dulmage-Mendelsohn decomposition} of 
a bipartite graph $G$.

In the following, 
we demonstrate how our nonbipartite DM decomposition derives Theorem~\ref{thm:dm} 
under the special properties of  bipartite graphs  
regarding  
\begin{rmenum}
\item 
inconsistent factor-components (Observation~\ref{obs:dm2inconst}) 
and 
\item 
the basilica decomposition (Observation~\ref{obs:bi2basilica}). 
\end{rmenum}
The set of inconsistent factor-components with some vertices from $D(G) \setminus W$ 
is denoted by $\ainconst{G}{W}$. 
The next statement about $\ainconst{G}{W}$ 
can be easily confirmed from the Gallai-Edmonds structure theorem. 
This statement can also be proved from first principles 
by a simple discussion on alternating paths, which is employed in original proof. 
As is also the case in the basilica and nonbipartite DM decomposition, 
the substantial part of the bipartite DM decomposition 
lies in $\const{G}$.

\begin{observation} \label{obs:dm2inconst}
The sets  $\ainconst{G}{A}$ and $\ainconst{G}{B}$ are disjoint. 
Any $C \in \ainconst{G}{B}$ is minimal with respect to $\dm{A}$. 
\end{observation}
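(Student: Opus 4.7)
The plan is to exploit the particularly rigid form the Gallai-Edmonds decomposition takes in a bipartite graph.  The key preliminary observation, which I would state and prove in a single line, is that any bipartite factor-critical graph consists of a single vertex: otherwise its two color classes differ in size by one, and deleting a vertex from the larger side leaves a bipartite graph whose sides differ by two, which cannot possess a perfect matching.  Applied via Theorem~\ref{thm:ge}\ref{item:ge:fc}, this forces every odd component of $G - A(G)$ to be a single vertex; hence every vertex of $D(G)$ is isolated in $G - A(G)$, and in particular has all its neighbors in $A(G)$.  Combining this with Theorem~\ref{thm:ge}\ref{item:ge:allowed} yields the principle I will use throughout: inside $G[D(G) \cup A(G)]$, the allowed edges are exactly those joining $A(G)$ to $D(G)$.

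For the disjointness claim, I would assume toward contradiction that some inconsistent factor-component $C$ meets both $D(G) \cap A$ and $D(G) \cap B$ and take a shortest path $P = v_0 v_1 \cdots v_k$ of allowed edges in $C$ joining one to the other.  By Fact~\ref{fact:inconst2ge} the vertices of $P$ lie in $D(G) \cup A(G)$, so by the above principle the path strictly alternates between $D(G)$ and $A(G)$ beginning with $v_0 \in D(G)$; since $v_k \in D(G)$ this forces $k$ to be even.  On the other hand bipartite alternation forces $v_0$ and $v_k$ to have opposite colors precisely when $k$ is odd, giving an immediate contradiction.

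For the minimality claim, let $C \in \ainconst{G}{B}$.  The disjointness just proved gives $V(C) \cap D(G) \cap B = \emptyset$.  I would then show $A \cap V(C) \subseteq D(G)$ by noting that any $v \in A(G) \cap A \cap V(C)$ would, by the definition of $A(G)$, have a neighbor in $D(G)$, which by bipartiteness lies in $B$ and, because $V(C)$ is a connected component of $G[D(G) \cup A(G)]$ by Fact~\ref{fact:inconst2ge}, lies in $V(C)$, contradicting $V(C) \cap D(G) \cap B = \emptyset$.  Consequently $A \cap V(C) \subseteq D(G)$, and every neighbor of such a vertex lies in $A(G) \cap B \cap V(C) \subseteq V(C)$ by the isolation property; hence $E_G[A \cap V(C), B \setminus V(C)] = \emptyset$, so no $C' \neq C$ satisfies $C' \dmo{A} C$, proving that $C$ is minimal with respect to $\dm{A}$.

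The only potentially nontrivial input is the singleton lemma for bipartite factor-critical graphs, which is a one-line check; the rest is straightforward bookkeeping between color classes and Gallai-Edmonds classes, so I do not anticipate any substantial obstacle.
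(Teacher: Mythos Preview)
Your argument is correct and follows exactly the route the paper indicates: the observation is stated without a written-out proof, the paper only remarking that it ``can be easily confirmed from the Gallai-Edmonds structure theorem'' (or alternatively by an alternating-path argument).  Your write-up is a faithful expansion of the Gallai-Edmonds route, hinging on the fact that in a bipartite graph the odd components of $G-A(G)$ are singletons, so within $G[D(G)\cup A(G)]$ the allowed edges are precisely the $D(G)$--$A(G)$ edges; from there both the parity contradiction for disjointness and the neighborhood containment $N_G(A\cap V(C))\subseteq V(C)$ for minimality are immediate.  One cosmetic remark: the word ``shortest'' in your path argument is not actually used---any path of allowed edges already alternates between $D(G)$ and $A(G)$ by the edge classification you established---but this does no harm.
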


Bipartite graphs have a trivial structure 
regarding the basilica decomposition: 

\begin{observation} \label{obs:bi2basilica}
Let $G$ be a bipartite graph with color classes $A$ and $B$, 
and let $W\in \{A, B\}$. 
\begin{rmenum} 
\item Then, 
for each $H \in \const{G}$, 
$\pargpart{G}{H} = \{ V(H)\cap A, V(H)\cap B\}$. 
For each $H \in \ainconst{G}{W}$, $\pargpart{G}{H} = \{ V(H)\cap W\}$. 
\item 
For any $H_1, H_2\in\ecomp{G}$ with $H_1\neq H_2$, 
$H_1 \yield H_2$ does not hold. 
\end{rmenum}
\end{observation}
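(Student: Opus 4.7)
The plan is to handle (i) and (ii) separately, each via a bipartite specialization of the Gallai-Edmonds and basilica structure.

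For (i), the preparatory observation is that a factor-critical bipartite graph consists of a single vertex, so each odd component of $G - A(G)$ is a singleton and every vertex of $D(G)$ has all its $G$-neighbors in $A(G)$. Following the $D(G)$--$A(G)$ alternation along a connected component of $G[D(G) \cup A(G)]$ (an inconsistent factor-component by Fact~\ref{fact:inconst2ge}), a bipartite parity argument places all its $D(G)$-vertices into one color class $W'$ and all its $A(G)$-vertices into $\overline{W'}$. For $H \in \ainconst{G}{W}$, a witness vertex in $V(H) \cap D(G) \cap \overline{W}$ pins $W' = \overline{W}$, giving $V(H) \cap A(G) = V(H) \cap W$; then Fact~\ref{fact:inconst2basilica} and the paper's convention yield $\pargpart{G}{H} = \{V(H) \cap W\}$. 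For $H \in \const{G}$, the vertices $u, v \in V(H) \subseteq C(G)$ are matched by every maximum matching of $G$, so $\nu(G - u - v)$ equals either $\nu(G) - 2$ or $\nu(G) - 1$. For $u, v$ in opposite color classes, the elementary bipartite structure of $H$ (factorizable and factor-connected) supplies a perfect matching of $H - u - v$, which together with a maximum matching of $G - V(H)$ produces a matching of $G - u - v$ of size $\nu(G) - 1$, yielding $u \not\gsim{G} v$. For $u, v$ in the same color class, bipartite parity forbids any augmenting path from $u$ to $v$, so $\nu(G - u - v) = \nu(G) - 2$ and $u \gsim{G} v$. The $\gsim{G}$-classes thus coincide with the two color classes.

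For (ii), assume for contradiction $H_1 \yield H_2$ with $H_1 \neq H_2$, and let $X$ be a separating set for which $G[X]/H_1$ is factor-critical. If $H_1 \in \inconst{G}$, the key is to show $G[X]/H_1$ is bipartite. By (i), $V(H_1) \cap D(G) \subseteq W'$ and $V(H_1) \cap A(G) \subseteq \overline{W'}$. Every $G$-neighbor of a vertex in $V(H_1) \cap D(G)$ lies in $A(G)$, and connectedness of $H_1$ in $G[D(G) \cup A(G)]$ forces such a neighbor into $V(H_1) \cap A(G)$, so $V(H_1) \cap D(G)$ has no external neighbors. Every external neighbor of $V(H_1)$ thus arises from $V(H_1) \cap A(G) \subseteq \overline{W'}$ and lies in color $W'$, so the contracted vertex of $G[X]/H_1$ has all its neighbors in a single color class; hence $G[X]/H_1$ is bipartite. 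Since a bipartite factor-critical graph is a single vertex, $X = V(H_1)$, contradicting $V(H_2) \subseteq X$ with $H_2 \neq H_1$. If $H_1 \in \const{G}$, fix $S \in \pargpart{G}{H_1}$ with $H_2 \in \parup{G}{S}$ and pick $s \in S$ and $t \in V(H_2) \subseteq \vparup{G}{S}$. Factor-criticality of $G[X]/H_1$ excludes isolated vertices in $V(H_2)$, so $t$ has a neighbor in $G$, and one can choose a maximum matching $M$ of $G$ matching $t$, say $tz \in M$. Since $tz$ is allowed, $z \in V(H_2) \subseteq \vparup{G}{S}$. Lemma~\ref{lem:basilica2path}(iii) supplies an $M$-forwarding path $P$ from $t$ to $s$. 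Appending the edge $zt$ to $P$, or if $z$ already lies on $P$ taking the tail of $P$ starting at $z$, produces either an $M$-saturated path between $z$ and $s$ or an $M$-forwarding path from $s$ to $z$, depending on the parity of $z$'s position on $P$. Both outcomes contradict Lemma~\ref{lem:basilica2path}(iii) applied to $z$.

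The main technical hurdle sits in the inconsistent case of (ii): extracting bipartiteness of $G[X]/H_1$ hinges on the fact that $V(H_1) \cap D(G)$ has no external $G$-neighbors, which itself combines the singleton structure of factor-critical components in bipartite $G - A(G)$ with $H_1$ being a connected component of $G[D(G) \cup A(G)]$. Once this is in place, the inconsistent case reduces to the triviality of bipartite factor-critical graphs, while the consistent case is closed by a double application of Lemma~\ref{lem:basilica2path}(iii) via the forwarding-path extension.
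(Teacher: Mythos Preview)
The paper records this as an observation without proof, so there is no paper argument to compare against. Your treatment of part~(i) and of the inconsistent case of part~(ii) is correct. The consistent case of~(ii), however, contains a genuine gap.

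You take an $M$-forwarding path $P$ from $t$ to $s$ via Lemma~\ref{lem:basilica2path}\,(iii). By definition such a path leaves $t$ along an $M$-edge, and since $tz$ is the unique $M$-edge at $t$, the vertex $z$ is always the \emph{second} vertex of $P$; the case ``$z\notin V(P)$'' cannot occur, and ``appending $zt$'' is not well defined. The tail $zPs$ then begins and ends with a non-$M$ edge, so it is an $M$-\emph{exposed} path between $z$ and $s$, not $M$-saturated and not $M$-forwarding from $s$ to $z$. Lemma~\ref{lem:basilica2path}\,(iii) only excludes the latter two types for a vertex of $\vparup{G}{S}$, so no contradiction follows from your construction. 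What is missing is an explicit appeal to bipartiteness, which you never invoke in this case. A clean repair: since $H_2$ is consistent (an inconsistent factor-component is $\yield$-minimal by Fact~\ref{fact:inconst2basilica}, hence cannot lie strictly above $H_1$), it has at least two vertices; choose adjacent $t,z\in V(H_2)\subseteq \vparup{G}{S}$. Lemma~\ref{lem:basilica2path}\,(iii) supplies $M$-forwarding paths from $t$ to $s$ and from $z$ to $s$, each of even length, so $t$ and $z$ both lie in the colour class of $s$---impossible since $tz\in E(G)$.
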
 

Under Observation~\ref{obs:bi2basilica}, 
we define $\admcomp{G}{W}$ as the set $\{ C\in\dmcomp{G}: \sd{C} \subseteq W\}$. 
Define a mapping $f_W: \const{G}\cup \ainconst{G}{W} \rightarrow  \admcomp{G}{W}$ as $f_W(C) := \ds{V(C)\cap W}$ for $C \in \const{G}$. 
The next statement is obvious from Observation~\ref{obs:bi2basilica}.

\begin{observation} \label{obs:gdm2dm} 
The mapping $f_W$ is a bijection; and, 
 for any $C_1, C_2\in\ecomp{G}$, 
$C_1 \dm{W} C_2$ holds if and only if $f(C_1)\gdm f(C_2)$ holds. 
\end{observation}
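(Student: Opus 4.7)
The plan is to verify the two assertions of the observation in turn, in each case pushing the bipartite DM definitions through the simplifications that Observation~\ref{obs:bi2basilica} supplies in the bipartite setting.

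For the bijectivity of $f_W$, I would start by enumerating $\gpart{G}$ via Observation~\ref{obs:bi2basilica}(i): every member of $\gpart{G}$ is either $V(H)\cap A$ or $V(H)\cap B$ for some $H\in\const{G}$, or $V(H)\cap A(G)$ for some $H\in\inconst{G}$. The bipartite structure forces, for every $H\in\inconst{G}$, that $V(H)\cap D(G)$ lies in a single color class and $V(H)\cap A(G)$ lies in the opposite one (a short alternating-path argument from the Gallai-Edmonds description). Hence $V(H)\cap A(G)\subseteq W$ is equivalent to $H\in\ainconst{G}{W}$, and the members of $\gpart{G}$ contained in $W$ are exactly $\{V(H)\cap W : H\in\const{G}\}\cup\{V(H)\cap A(G) : H\in\ainconst{G}{W}\}$. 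Vertex-disjointness of distinct factor-components then yields injectivity and surjectivity of $f_W$ onto $\admcomp{G}{W}$.

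For the order statement, the key calculation is that in a bipartite graph $\parvcoup{G}{\sd{f_W(C)}}=V(C)\setminus W$ for every $C$ in the domain of $f_W$. Observation~\ref{obs:bi2basilica}(ii) forces the order $\yield$ to be trivial, so $\vparup{G}{H}=\emptyset$ for every $H\in\ecomp{G}$; the ``upper'' contribution to $\parvcoup{G}{S}$ therefore drops out, leaving only the petal within $H$, which collapses to $V(H)\setminus S$ (for $H\in\ainconst{G}{W}$ the paper's convention gives the same formula). Substituting this identity into the definitions, and using vertex-disjointness of distinct factor-components to simplify $\parNei{G}{\cdot}$, I would check that for distinct $C_1,C_2$ in the domain, both $C_1\dmo{W}C_2$ and $f_W(C_1)\gdmo f_W(C_2)$ are equivalent to the existence of an edge of $G$ joining $V(C_1)\setminus W$ to $V(C_2)\cap W$. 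Taking transitive closures then translates $\dm{W}$-chains into $\gdm$-chains and back.

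The main obstacle is the domain mismatch: $\dm{W}$ is defined on all of $\ecomp{G}$, whereas $f_W$ is only defined on $\const{G}\cup\ainconst{G}{W}$. For the biconditional to line up under transitive closures, one must rule out $\dmo{W}$-chains that pass through $\ainconst{G}{W^c}$ at an intermediate link. This is delivered by Observation~\ref{obs:dm2inconst}: each $C\in\ainconst{G}{W^c}$ is minimal with respect to $\dm{W}$, so if such a $C$ occurred as an intermediate link every preceding link would be forced to equal it, contradicting the choice of the chain's initial endpoint in the domain of $f_W$. Hence chains joining two elements of the domain stay within the domain, and the translation between $\dm{W}$ and $\gdm$ goes through.
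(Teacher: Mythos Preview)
Your proposal is correct and follows the only route the paper indicates: the paper gives no argument beyond declaring the observation ``obvious from Observation~\ref{obs:bi2basilica}'', and your proof is exactly the unwinding of definitions through that observation, done with appropriate care. Your handling of the domain mismatch (the printed statement quantifies over all of $\ecomp{G}$, but $f_W$ is defined only on $\const{G}\cup\ainconst{G}{W}$) via Observation~\ref{obs:dm2inconst} is sound; for completeness you might note symmetrically that a $\gdmo$-chain starting in $\admcomp{G}{W}$ automatically stays there, since $\parvcoup{G}{\sd{D}}\subseteq W^c$ forces $\parNei{G}{\parvcoup{G}{\sd{D}}}\subseteq W$ by bipartiteness, and every base lies in a single color class by Observation~\ref{obs:bi2basilica}(i).
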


According to Theorem~\ref{thm:gdm} and Observation~\ref{obs:gdm2dm}, 
the system $(\const{G}\cup \ainconst{G}{W}, \dm{W})$ is a poset.  
 Observations~\ref{obs:dm2inconst} and \ref{obs:gdm2dm} now prove Theorem~\ref{thm:dm}.

\section{Computational Properties} \label{sec:alg} 

Given a graph $G$,  its basilica decomposition 
can be computed in $O(|V(G)|\cdot |E(G)|)$ time~\cite{DBLP:conf/isaac/Kita12, DBLP:journals/corr/abs-1205-3816}. 
Assume that the basilica decomposition of $G$ is given. 
From the definition of $\gdmo$, 
the poset $(\dmcomp{G}, \gdm)$ can be computed in $O(|\gpart{G}|\cdot |E(G)|)$ time,
and accordingly, in $O(|V(G)|\cdot |E(G)|)$ time. 
According to the definition of $\nono$, 
given the poset $(\dmcomp{G}, \gdm)$, 
the TFR poset $\gdmposet{G}$ can be obtained in $O(|V(G)|)$ time. 
Therefore, the next thereom can be stated. 

\begin{theorem} 
Given a graph $G$, 
the TFR poset $\gdmposet{G}$ can be computed in $O(|V(G)|\cdot |E(G)|)$ time. 
\end{theorem}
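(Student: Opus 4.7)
The plan is essentially to assemble the three subroutines that the paragraph preceding the theorem already identifies, verifying that each fits inside the $O(|V(G)|\cdot|E(G)|)$ budget and that the whole pipeline is correct.

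First, I would invoke the cited algorithmic result for the basilica decomposition to obtain, in $O(|V(G)|\cdot|E(G)|)$ time, the data that identify $\ecomp{G}$, $\gpart{G}$, and for each $S\in\gpart{G}$ the set $\parvcoup{G}{S}$. Note that since distinct DM components correspond bijectively to distinct bases in $\gpart{G}$, this already gives us $\dmcomp{G}$ together with the attribute $\sd{\cdot}$, and we have $|\dmcomp{G}| = |\gpart{G}| \le |V(G)|$.

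Second, I would compute $\gdmo$ directly from the definition: for each $S\in\gpart{G}$, traverse the edges incident with $\parvcoup{G}{S}$, and for each such edge check whether the endpoint outside $\parvcoup{G}{S}$ lies in some $S'\in\gpart{G}$, recording the relation $\ds{S}\gdmo\ds{S'}$. Using a precomputed lookup table assigning each vertex to its containing member of $\gpart{G}$, this costs $O(|E(G)|)$ work per base, hence $O(|\gpart{G}|\cdot|E(G)|) = O(|V(G)|\cdot|E(G)|)$ in total. Taking the transitive closure to obtain $\gdm$ then amounts to running a reachability computation on a digraph with at most $|V(G)|$ vertices and $O(|V(G)|\cdot|E(G)|)$ arcs, which again fits within the budget.

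Third, I would derive the forbidden relation $\non$. The immediate relation $\nono$ is read off from the bases: for each pair $D_1,D_2\in\dmcomp{G}$, check whether $\sd{D_2}\subseteq V(D_1)\setminus\sd{D_1}$, which is constant-time per pair once bases and coupled sets are indexed. Then $\non$ is obtained by composing $\gdm$ with $\nono$, which fits comfortably in the time already spent on transitive closure; the additive $O(|V(G)|)$ overhead mentioned by the author accounts for the fact that this composition is dominated by the previously performed reachability work. Summing the three phases yields the claimed $O(|V(G)|\cdot|E(G)|)$ bound.

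The only genuine subtlety — which is the part I would be most careful to justify — is the bookkeeping that ensures the $\gdmo$ construction truly runs in $O(|E(G)|)$ per base rather than per pair of bases; this is handled by the vertex-to-base lookup table, and all other steps are then routine.
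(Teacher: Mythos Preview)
Your proposal is correct and follows essentially the same three-stage outline as the paper's own argument (which is just the paragraph immediately preceding the theorem): compute the basilica decomposition, then $\gdmo$/$\gdm$, then $\nono$/$\non$, each within the stated time budget. You supply more implementation detail than the paper does---the vertex-to-base lookup, the explicit reachability pass, the composition of $\gdm$ with $\nono$---but the underlying approach is identical.
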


\bibliographystyle{splncs03.bst}
\bibliography{gdm.bib}

\end{document}